\newcounter{counter}
\newtheorem{theorem}[counter]{Theorem}
\newtheorem{lemma}[counter]{Lemma}
\newtheorem{definition/proposition}[counter]{Definition/Proposition}
\newtheorem{corollary}[counter]{Corollary}
\newtheorem{proposition}[counter]{Proposition}
\newtheorem*{theorem*}{Theorem}
\numberwithin{equation}{section}
\theoremstyle{definition}
\newtheorem{definition}[counter]{Definition}
\newtheorem{remark}[counter]{Remark}
\newcommand\omicron{o}
\newcommand{\ob}{\operatorname{ob}}
\newcommand{\B}{\mathcal{B}}
\newcommand{\Cu}{\operatorname{Cu}}
\newcommand{\D}{\mathcal{D}}
\newcommand{\Ro}{\mathfrak{R}}
\newcommand{\Z}{\mathcal{Z}}
\newcommand{\R}{\mathcal{R}}
\newcommand{\W}{\mathcal{W}}
\newcommand{\K}{\mathbb{K}}
\newcommand{\T}{\mathbb{T}}
\newcommand{\N}{\mathbb{N}}
\newcommand{\M}{\mathbb{M}}
\newcommand{\z}{\mathbb{Z}}
\newcommand{\Hilb}{\mathrm{Hilb}}
\newcommand{\Ad}{\mathrm{Ad}}
\newcommand{\Aut}{\mathrm{Aut}}
\newcommand{\Inn}{\mathrm{Inn}}
\newcommand{\Out}{\mathrm{Out}}
\newcommand{\id}{\operatorname{id}}
\newcommand{\Hom}{\operatorname{Hom}}
\newcommand{\au}{\approx_{a.u}}
\title[A classification of anomalous actions]{A classification of anomalous actions through model action absorption}
\author{Sergio Girón Pacheco}
\address{\hskip-\parindent Sergio Girón Pacheco, Department of mathematics, KU Leuven, Celestijnenlaan 200B, 3001, Leuven, Belgium.}
\email{sergio.gironpacheco@kuleuven.be}
\thanks{{\footnotesize  The author was supported by the Ioan and Rosemary James Scholarship awarded by St John's College and the Mathematical Institute, University of Oxford, as well as by project G085020N funded by the Research Foundation Flanders (FWO)}}
\begin{document}

\begin{abstract}
We discuss a strategy for classifying anomalous actions through model action absorption. We use this to upgrade existing classification results for Rokhlin actions of finite groups on C$^*$-algebras, with further assuming a UHF-absorption condition, to a classification of anomalous actions on these C$^*$-algebras.
\end{abstract}
\maketitle
\numberwithin{counter}{section}
\section*{Introduction}
\renewcommand*{\thecounter}{\Alph{counter}}
Connes' classification of automorphisms on the hyperfinite II$_1$ factor $\R$ (\cite{CO77,CO75}) paved the way towards a classification of symmetries of simple operator algebras. Over the next decade, this was followed by Vaughan Jones' classification of finite group actions on $\R$ (\cite{JON80}) and Ocneanu's classification of actions of countable amenable groups on $\R$ (\cite{OC06}). To achieve these classification results, an important role is played by adaptations of Connes' non-commutative Rokhlin lemma, which yields that outer group actions on $\R$ satisfy a condition often called the Rokhlin property that is analogous to properties of ergodic measure preserving actions of amenable groups on probability spaces (\cite{RO48},\cite{OW80}). In the C$^*$-setting, the analogous property is not automatic. However, there has been substantial progress in the classification of those group actions on C$^*$-algebras that satisfy the Rokhlin property (\cite{HEJO82,HEJO83,HEOC84,EVKI97,IZU04,IZU04II,NA16,GASA16}). Very recently, groundbreaking results towards a classification of group actions without the need for the Rokhlin property have appeared (\cite{GASZ22,IZMA20,IZMA21}).
\par Connes, Jones and Ocneanu also classify group homomorphisms $G\rightarrow \Out(\R)$ up to outer conjugacy (\cite{CO77,JON80,OC06}). Such a homomorphism is called a \emph{$G$-kernel} on $\R$. The classification of $G$-kernels on injective factors was completed by Katayama and Takesaki (\cite{KATA04}). These can be understood as the first classification results for quantum symmetries of $\R$ which do not arise as group actions. \emph{Quantum symmetry} is a broad term that encapsulates generalised notions of symmetry that appear in topological and conformal field theories. These symmetries are often encoded through the action of a higher category equipped with a product operation such that the category weakly resembles a group. In the case of $G$-kernels, these can be understood as actions of $2$-groups or tensor categories (\cite{JON20,EVGI21}). The study of quantum symmetries of $\R$ was developed through the subfactor theory of Jones (\cite{JON90}) culminating in Popa's classification of subfactors $\mathcal{N}\subset\R$ with amenable standard invariant (\cite{PO94}).
\par In comparison to the success in understanding the existence and classification of $G$-kernels on von Neumann algebras, the study of $G$-kernels on C$^*$-algebras has up to recently been underdeveloped. In \cite{JON20} Corey Jones studies the closely related notion of \emph{$\omega$-anomalous action}.\footnote{In the case that a C$^*$-algebra $A$ has trivial centre, the study of $\omega$-anomalous actions on $A$ is equivalent to the study of $G$-kernels on $A$ (\cite[Section 2.3]{JON20}).} In his paper, Corey Jones provides a C$^*$-adaptation of Vaughan Jones' work (\cite{JON79}), laying out a systematic way to construct anomalous actions on C$^*$-crossed products. Corey Jones also establishes existence and no-go theorems for anomalous actions on abelian C$^*$-algebras. In \cite{EVGI21} Evington and the author lay out an algebraic $K$-theory obstruction to the existence of anomalous actions on tracial C$^*$-algebras. Recently, Izumi has developed a cohomological invariant for $G$-kernels (\cite{IZU23}). This invariant introduces new obstructions to the existence of $G$-kernels which also apply in the non-tracial setting. Further, Izumi uses this invariant to classify $G$-kernels of some poly-$\z$ groups on strongly self-absorbing UCT Kirchberg algebras. 
\par This paper provides a classification of anomalous actions with the Rokhlin property on C$^*$-algebras where $K$-theoretic obstructions vanish. The Rokhlin property for finite group actions was first systematically studied by Izumi (\cite{IZU04,IZU04II}). In his work, Izumi uses the Rokhlin property to boost existing classification results of Kirchberg algebras in the UCT class (\cite{PHI00,Ki95}) and unital, simple, separable, nuclear, tracially approximate finite dimensional (TAF) algebras in the UCT class (\cite{LIN04}) by their K-theory, to a classification of finite group actions with the Rokhlin property on these classes of C$^*$-algebras by the induced module structure on $K$-theory (\cite[Theorem 4.2,\ Theorem 4.3]{IZU04II}).\footnote{TAF algebras are C$^*$-algebras that may be locally approximated by finite dimensional C$^*$-algebras in trace (see \cite[Definition 1,2]{LIN01preclass}).}
\par The strategy of this paper is to bootstrap Izumi's classification of $G$ actions with the Rokhlin property, for finite groups $G$, to achieve analogous classification results for anomalous actions. To do this, we will assume that our C$^*$-algebra $A$ satisfies a UHF absorbing condition. To be precise, that the $A$ is stable under tensoring with the UHF algebra $M_{|G|^\infty}\cong \bigotimes_{i\in\N} M_{|G|}$. This property is considered for example in \cite{SZBA17} and \cite{GASA16} and in some cases follows immediately from the existence of Rokhlin $G$ actions on $A$ (\cite[Theorems 3.4 and 3.5]{IZU04II},\cite[Theorem 5.2]{GASA16}). Further assuming the Rokhlin property, we will establish a model action absorption result (Proposition \ref{prop:absorption}). Second, we will use the model action absorption combined with a trick, that builds on ideas of Connes in the cyclic group case (\cite[Section 6]{CO77}). This trick lets us use the existence of anomalous action on the UHF-algebra $M_{|G|^\infty}$ to reduce the classification of anomalous actions to the classification of cocycle actions. We may not apply this method by replacing $M_{|G|^\infty}$ by $\Z$ or $\mathcal{O}_\infty$ due to the obstruction results of \cite[Theorem A]{EVGI21} and \cite[Theorem 3.6]{IZU23}. This argument allows us to prove the following.
\begin{theorem}\label{thm:classificationKirchberg+TAFintro}{(cf. Theorem \ref{thm:Rokhclasskirchberg} and Theorem \ref{thm:RokhlinclassTAF})}
    Let $G$ be a finite group and $A\cong A\otimes M_{|G|^\infty}$ be either a Kirchberg algebra in the UCT class or a unital, simple, separable, nuclear TAF algebra in the UCT class. If $(\alpha,u),(\beta,v)$ are anomalous $G$ actions on $A$ with the Rokhlin property, then $(\alpha,u)$ is cocycle conjugate to $(\beta,v)$ through an automorphism that is trivial on $K$-theory if and only if $K_i(\alpha_g)=K_i(\beta_g)$ for all $g\in G$ and the anomalies of $(\alpha,u)$ and $(\beta,v)$ coincide.
\end{theorem}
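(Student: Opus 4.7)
The necessity direction is straightforward: cocycle conjugacy manifestly preserves the anomaly class $[\omega]\in H^3(G,\T)$, and the induced maps on $K$-theory are well-defined cocycle conjugacy invariants (an implementing automorphism that is trivial on $K$-theory conjugates $K_i(\alpha_g)$ to $K_i(\beta_g)$). The substance lies entirely in sufficiency, and my plan is to reduce it to Izumi's existing classification of Rokhlin actions on such $C^*$-algebras by executing the two-step strategy previewed in the introduction.

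\emph{Step 1: model action absorption.} Using $A\cong A\otimes M_{|G|^\infty}$ and the Rokhlin property of $(\alpha,u)$ and $(\beta,v)$, I would first invoke Proposition \ref{prop:absorption} to show that each of these anomalous actions absorbs a chosen model anomalous action on $M_{|G|^\infty}$ of the matching anomaly. Because the $K$-theoretic obstructions of \cite{EVGI21,IZU23} vanish on $M_{|G|^\infty}$, such model anomalous actions exist for every $3$-cocycle class $[\omega]\in H^3(G,\T)$; in fact a canonical pair of mutually opposite models can be fixed once and for all on $M_{|G|^\infty}\otimes M_{|G|^\infty}\cong M_{|G|^\infty}$. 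Absorption guarantees that, up to cocycle conjugacy through a $K$-trivial automorphism, $(\alpha,u)$ and $(\beta,v)$ may be replaced by their tensor products with these model actions without changing the induced $K$-theory data or the anomaly.

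\emph{Step 2: the Connes-type untwisting.} Using $A\cong A\otimes M_{|G|^\infty}\otimes M_{|G|^\infty}$, I would tensor both $(\alpha,u)$ and $(\beta,v)$ with a model anomalous action carrying the inverse anomaly $[\omega]^{-1}$. Diagonally this trivialises the $3$-cocycle, producing genuine cocycle $G$-actions on $A$ that still enjoy the Rokhlin property (since Rokhlin is preserved under tensoring with a Rokhlin model) and still carry the same $K$-theoretic data (tensoring with $M_{|G|^\infty}$ does not perturb the induced module structure on $K_*(A)$ once one remembers that $K_*(A)\otimes\z[1/|G|]=K_*(A)$ by the UHF-absorption). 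Applying Izumi's classification \cite[Theorem 4.2, Theorem 4.3]{IZU04II} to these cocycle actions yields a cocycle conjugacy through a $K$-trivial automorphism between the untwisted systems. Untwisting by the opposite-anomaly model action on the third tensor factor then transfers this cocycle conjugacy back to one between $(\alpha,u)$ and $(\beta,v)$, as required.

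\emph{Main obstacle.} The technically delicate point is the Connes-type untwisting: one must check that diagonally combining two anomalous actions of inverse anomaly really produces a cocycle action (not merely an anomalous action with a coboundary representative), that the Rokhlin property passes through all of these tensor manipulations, and, most importantly, that after the final untwisting step the resulting intertwiner on $A\otimes M_{|G|^\infty}\otimes M_{|G|^\infty}\cong A$ can be arranged to be trivial on $K_*(A)$. I expect the last point to hinge crucially on $|G|$ being invertible in $K_*(M_{|G|^\infty})$, which is precisely why the argument cannot be run with $\Z$ or $\mathcal{O}_\infty$ in place of $M_{|G|^\infty}$.
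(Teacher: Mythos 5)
Your plan is essentially the paper's proof: model action absorption (Proposition \ref{prop:absorption}) followed by the Connes-type untwisting with the inverse-anomaly model action on $M_{|G|^\infty}$, reducing to Izumi's classification of Rokhlin actions, exactly as formalised in Lemma \ref{lem:abstract} and executed in Theorems \ref{thm:Rokhclasskirchberg} and \ref{thm:RokhlinclassTAF}. The only details you gloss over are that what is absorbed is the genuine model $s_G$ (the anomalous models enter via the pairing $(s_G^{\overline{\omega}},u^{\overline{\omega}})\otimes(s_G^{\omega},u^{\omega})\simeq(s_G,1)$, which uses Proposition \ref{prop:rokhlin} together with the Herman--Jones/Izumi classification on $M_{|G|^\infty}$) and that before invoking Izumi's theorems one perturbs the untwisted Rokhlin cocycle actions into genuine group actions via \cite[Lemma 3.12]{IZU04}; both are citation-level points, and the $K$-triviality bookkeeping you flag is carried out as you anticipate, using that the absorption isomorphisms are approximately unitarily equivalent to first-factor embeddings and that $s_G^{\overline{\omega}}$ is approximately inner.
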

\par  Similarly, we can boost Nawata's classification of Rokhlin $G$ actions on $\mathcal{W}$ (see \cite{NA16}) to a classification of anomalous actions on $\W$.
\begin{theorem}{(cf. Theorem \ref{thm:classW})}
    Let $G$ be a finite group and $(\alpha,u), (\beta,v)$ be anomalous $G$ actions on $\W$ with the Rokhlin property, then $(\alpha,u)$ is cocycle conjugate to $(\beta,v)$ if and only if the anomalies of $(\alpha,u)$ and $(\beta,v)$ coincide.
\end{theorem}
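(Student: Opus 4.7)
The plan is to bootstrap Nawata's classification of Rokhlin $G$-actions on $\W$ \cite{NA16} via the same two-step strategy that the introduction describes for Theorem~\ref{thm:classificationKirchberg+TAFintro}. First I will check that the setup of Proposition~\ref{prop:absorption} applies: because $\W$ absorbs every strongly self-absorbing C$^*$-algebra tensorially, in particular $\W\otimes M_{|G|^\infty}\cong \W$, so the UHF absorption hypothesis is automatic and no additional assumption on the algebra is required. Model action absorption then lets me replace each of $(\alpha,u)$ and $(\beta,v)$, up to cocycle conjugacy, by its tensor product with a fixed model anomalous Rokhlin action $\mu$ on $M_{|G|^\infty}$ whose anomaly coincides with the common anomaly of $(\alpha,u)$ and $(\beta,v)$.

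Next I will implement the Connes-type anomaly-trivialisation trick. Using Corey Jones's construction of anomalous actions on UHF algebras realising prescribed anomaly classes, I can pick an auxiliary anomalous action $\bar\mu$ on $M_{|G|^\infty}$ whose anomaly is inverse to the common anomaly of $(\alpha,u)$ and $(\beta,v)$. Tensoring the absorbed actions from the previous step by $\bar\mu$ kills the anomaly and produces Rokhlin cocycle actions $\tilde\alpha,\tilde\beta$ on $\W\otimes M_{|G|^\infty}\otimes M_{|G|^\infty}\cong\W$. The Rokhlin property is preserved because Rokhlin projections in any tensor factor yield Rokhlin projections in the whole tensor product.

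At this point I invoke Nawata \cite{NA16}: since $K_*(\W)=0$ and $\W$ has a unique trace, any two Rokhlin $G$-actions on $\W$ are cocycle conjugate, so $\tilde\alpha$ and $\tilde\beta$ are cocycle conjugate. Undoing the tensor-product manipulations---possible again by $\W$-absorption of $M_{|G|^\infty}$---translates this into a cocycle conjugacy of the original anomalous actions $(\alpha,u)$ and $(\beta,v)$. The ``only if'' direction is immediate because the anomaly is a cocycle-conjugacy invariant of anomalous actions by definition.

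The hardest part, I expect, is the careful bookkeeping inside the Connes trick: each tensor-product identification must be realised by a cocycle conjugacy of anomalous actions rather than a mere $*$-isomorphism, and the anomalies must be tracked as they multiply and cancel across the several tensor factors. Once the infrastructure of Proposition~\ref{prop:absorption} together with the anomaly-trivialisation argument is in place (as developed for the Kirchberg/TAF case in Theorem~\ref{thm:classificationKirchberg+TAFintro}), the $\W$ case should follow essentially formally, because the $K$-theoretic part of the classifying invariant appearing in that theorem vanishes identically on $\W$.
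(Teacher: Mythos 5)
Your strategy is the paper's (model action absorption, a Connes-type trivialisation of the anomaly, then Nawata), but two of your steps are wrong or unjustified as written. First, the anomaly bookkeeping: Proposition \ref{prop:absorption} gives $(\alpha,u)\simeq(\alpha\otimes s_G,u\otimes 1)$ where $s_G$ is the \emph{anomaly-free} model action; it does not give absorption of an anomalous model action $\mu$ with $\omicron(\mu)=\omega$. Indeed $(\alpha,u)\simeq(\alpha,u)\otimes\mu$ is impossible for nontrivial $\omega$, since cocycle conjugacy preserves the anomaly and $\omicron\big((\alpha,u)\otimes\mu\big)=\omega^2$. Consequently your next sentence also fails: tensoring $(\alpha,u)\otimes\mu$ by $\overline{\mu}$ with anomaly $\overline{\omega}$ yields total anomaly $\omega^2\overline{\omega}=\omega$, not a cocycle action. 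The correct manipulation is to split $s_G$ as $s_G^{\overline{\omega}}\otimes s_G^{\omega}$ and regroup, so that $\alpha\otimes s_G^{\overline{\omega}}$ is the cocycle action and the factor $s_G^{\omega}$ is carried along untouched. Second, and this is the essential missing ingredient, the ``undoing'' cannot be justified by $\W\otimes M_{|G|^\infty}\cong\W$ alone, which is a statement about C$^*$-algebras and not about the dynamics. What is needed is precisely that $s_G^{\overline{\omega}}\otimes s_G^{\omega}$ is cocycle conjugate to $s_G$ as $G$-systems on $M_{|G|^\infty}$, so that after classifying the untwisted parts one can re-tensor by $s_G^{\omega}$ and collapse back via Proposition \ref{prop:absorption}. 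This is condition \ref{item:2} of Lemma \ref{lem:abstract}, and in the paper it is obtained from the Rokhlin property of the model actions (Proposition \ref{prop:rokhlin}) together with the classification of Rokhlin actions on UHF algebras, \cite[Theorem III.6]{HEJO83} combined with \cite[Lemma 3.12]{IZU04}. You flag exactly this bookkeeping as ``the hardest part'' but offer no argument for it.

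Two smaller points. \cite[Corollary 3.7]{NA16} classifies genuine Rokhlin $G$-actions on $\W$, while your $\tilde\alpha,\tilde\beta$ are cocycle actions; one must first untwist them, which holds because $\W\cong\W\otimes M_{|G|}$ and hence \cite[Remark 1.5]{GASZ22} applies (this is how the paper verifies \ref{item:3}). Also, your justification of UHF-stability overreaches: $\W$ does not absorb every strongly self-absorbing C$^*$-algebra (it has a trace, so it cannot absorb $\mathcal{O}_\infty$ or $\mathcal{O}_2$); the fact you actually need, $\W\otimes M_{|G|^\infty}\cong\W$, is nevertheless true. With these repairs your argument becomes the paper's proof, which runs the whole scheme through Lemma \ref{lem:abstract} with $\D=M_{|G|^\infty}$, $\Ro$ the Rokhlin anomalous actions on $\W$, and $\Lambda$ the trivial functor.
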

\par As a consequence of the results of \cite{GASA16}, we may also apply this strategy to classify anomalous actions with the Rokhlin property on C$^*$-algebras that arise as inductive limits of 1-dimensional NCCW complexes (see Theorem \ref{thm:1dimNCCW}).

\par The procedure utilised for the proof of Theorem \ref{thm:classificationKirchberg+TAFintro} can be expected to work in more generality.  The reason we restrict to unital, simple, nuclear TAF algebras in the tracial setting is due to the need to apply classification results for (cocycle) group actions. With more novel stably finite classification results in hand (\cite{Class1}), and using similar techniques to \cite{IZU04,IZU04II}, a classification of finite group actions with the Rokhlin property on simple, separable, nuclear, $\Z$-stable C$^*$-algebra satisfying the UCT through the induced module structure on the Elliott invariant is plausible. A strategy to approach this classification problem has been proposed by Szab\'o in private communications. With such a result in hand, one could apply the abstract Lemma \ref{lem:abstract} to yield the equivalent to Theorem \ref{thm:classificationKirchberg+TAFintro} in the generality of simple, separable, nuclear, $M_{|G|^\infty}$-stable C$^*$-algebras satisfying the UCT.
\par Recent advances in the classification of more general symmetries on C$^*$-algebras pave the way towards a classification of quantum symmetries. Significant results in this direction are the classification of AF-actions of fusion categories on AF-algebras (\cite{QCCJRHP22}), as well as Yuki Arano's announcement of an adaptation of Izumi's techniques in \cite{IZU04} to actions of fusion categories with the Rokhlin property. In the final section of this paper, we connect our results to the work in \cite{QCCJRHP22}. We demonstrate the existence of an AF $\omega$-anomalous $G$-action with the Rokhlin property on $M_{|G|^\infty}$ which we denote by $\theta_G^{\omega}$. This has structural implications for anomalous actions with the Rokhlin property on any AF-algebra $A$. Indeed, combined with Theorem \ref{thm:classificationKirchberg+TAFintro}, the existence of $\theta_G^{\omega}$ implies that every anomalous action on $A$ with the Rokhlin property, that consists of automorphisms that act trivially on $K$-theory, is automatically AF (see Corollary \ref{cor:AF}).  Under some assumptions on the anomaly, an application of the classification results of \cite{QCCJRHP22} establish the converse (see Corollary \ref{cor:AF}). This partial converse exhibits a difference in behavior between anomalous actions and group actions (see the discussion following Corollary \ref{cor:AF}).
\par The paper is organised as follows. In Section \ref{sec:prelim} we recall some necessary background on anomalous actions. Section \ref{sec:UHFactions} recalls the construction of model anomalous actions on UHF algebras. In Section \ref{sec:absorption} we prove a model action absorbing result for finite group anomalous actions. In Section \ref{sec:classification} we set out an abstract lemma for the classification of anomalous actions (Lemma \ref{lem:abstract}) which we use to prove our main results. Finally, in Section \ref{sec:applications}, we discuss an application of the classification result to AF-actions.
\subsection*{Acknowledgements} The author would like to thank Samuel Evington, Eusebio Gardella, Andr\'e Henriques, Corey Jones, Ulrich Pennig and Stuart White for comments and discussions that have been useful for this paper. An initial version of this work forms part of the authors DPhil thesis \cite{THESIS}.
\section{Preliminaries}\label{sec:prelim}
\numberwithin{counter}{section}
Throughout, $A$ and $B$ will be used to denote C$^*$-algebras and $G,\Gamma,K$ will be used to denote countable discrete groups. We let $\T\subset \mathbb{C}$ be the circle group. We denote the multiplier algebra of $A$ by $M(A)$. Any automorphism $\alpha\in \Aut(A)$ extends uniquely to an automorphism of $M(A)$, we denote this extension also by $\alpha$. For a unitary $u\in M(A)$ we write $\Ad(u)$ for the automorphism $a\mapsto uau^*$ of $A$ and the group of inner automorphisms on $A$ by $\Inn(A)$. Recall that a $G$-kernel of $A$ is a group homomorphism $G\rightarrow \Aut(A)/\Inn(A)=\Out(A)$. We now recall the definition of an anomalous action from \cite[Definition 1.1]{JON20}. In the case that $A$ has trivial centre this notion coincides with a lift of a $G$-kernels into $\Aut(A)$. 
\begin{definition}\label{def:anomaction}
An \emph{anomalous action} of a countable discrete group $G$ on a C$^*$-algebra $A$ consists of a pair $(\alpha,u)$ where
$$\alpha: G\rightarrow \Aut(A)$$
$$u:G\times G\rightarrow U(M(A))$$ 
are a pair of maps such that
\begin{align}
    \alpha_g\alpha_h&=\Ad(u_{g,h})\alpha_{gh},\ \text{for all}\ g,h\in G,\label{eqn:omega1}\\ 
    \alpha_g(u_{h,k})&u_{g,hk}u_{gh,k}^*u_{g,h}^* \in \T\cdot 1_{M(A)},\ \text{for all}\ g,h,k\in G. \label{eqn:omega}
\end{align}
\end{definition}
Firstly, note that in (\ref{eqn:omega1}) and (\ref{eqn:omega}) we have used the subscript notation $\alpha_g$ and $u_{g,h}$ instead of $\alpha(g)$ and $u(g,h)$ for $g,h\in G$. We will use this throughout when notationally convenient.
\par As shown in \cite[Lemma 7.1]{EILMAC47} the formula in (\ref{eqn:omega}) defines a circle valued $3$-cocycle i.e. an element of $Z^3(G,\T)$. We will call this the \emph{anomaly} of the action and denote it by $o(\alpha,u)$. For $\omega\in Z^3(G,\T)$ we say $(\alpha,u)$ is a $(G,\omega)$ action on $A$ to mean that $(\alpha,u)$ is an anomalous action of $G$ on $A$ with anomaly $\omega$.\footnote{In \cite{JON20} the anomaly $\omega$ is carried as part of the data. We prefer to see the anomaly as an invariant of the pair $(\alpha,u)$.} If $\omega=1$ then we call $(\alpha,u)$ a \emph{cocycle action}. Note that any anomalous action $(\alpha,u)$ induces a $G$-kernel when passing to the quotient group $\Out(A)$, we denote its associated $G$-kernel by $\overline{\alpha}$. For any $G$-kernel $\overline{\alpha}$ on $A$ we denote by $\ob(\overline{\alpha})\in H^3(G,Z(U(M(A))))$ its $3$-cohomology invariant (see e.g. \cite[Section 2.1]{EVGI21}).
\par The reader should be warned that there is a slight variation in Definition \ref{def:anomaction} to the definitions of anomalous actions in \cite{JON20} and \cite{EVGI21}. Given our conventions in Definition \ref{def:anomaction}, a $(G,\omega)$ action induces an $\overline{\omega}$ anomalous action as in \cite[Definition 1.1]{JON20}, this is seen by taking $m_{g,h}=u_{g,h}^*$.
\par Throughout this paper, we will denote the algebra of bounded sequences of $A$ quotiented by those sequences going to zero in norm by $A_\infty$. For a $^*$-closed subset $S$ of $A_{\infty}$ we may consider the commutant C$^*$-algebra $A_\infty\cap S'=\{x\in A_{\infty}:[x,S]=0\}$ and the annihilator $A_{\infty}\cap S^{\perp}=\{x\in A_{\infty}: xS=Sx=0\}.$
We may then denote Kirchberg's sequence algebra by 
$$F(S,A_{\infty})=(A_{\infty}\cap S')/(A_{\infty}\cap S^{\perp}).$$
In the case that $S$ is the C$^*$-algebra of constant sequences in $A_{\infty}$ we denote this simply by $F(A)=F(A,A_{\infty})$ and $F(A)$ the \emph{central sequence algebra} of $A$. Note that $F(A)$ is a unital C$^*$-algebra whenever $A$ is $\sigma$-unital. Indeed, the unit is given by $h=(h_n)$ for any sequential approximate unit $h_n$ for $A$.
\par Any automorphism $\theta\in\Aut(A)$ induces an automorphism $\theta$ of $A_\infty$ through $(a_n)\mapsto(\theta(a_n))$ for any $(a_n)\in A_\infty$.\footnote{Note the abuse of notation.} If a subset $S$ of $A_\infty$ is invariant under both $\theta$ and $\theta^{-1}$, then so are $A_\infty\cap S'$ and $A_\infty \cap S^{\perp}$ and $\theta$ induces an automorphism of $F(S,A_\infty)$. 
\begin{remark}\label{rem:innertrivialoncentral}
When $A$ is equipped with a $(G,\omega)$ action $(\alpha,u)$, it induces a $(G,\omega)$ action on $A_\infty$. In fact, $\alpha$ induces a group action on $F(A)$ as $\Ad(u)(x)-x\in A_\infty \cap A^{\perp}$ for any $x\in A_\infty\cap A'$ and $u\in U(M(A))$. Similarly, if $S=S^*$ is an $\alpha$ invariant subset of $A_{\infty}$ containing $A$ and $S$ is also invariant by $u_{g,h}$ for all $g,h\in G$ (i.e. $u_{g,h}S+Su_{g,h}\in S$ for all $g,h\in G$), then $\alpha$ induces a group action on $F(S,A_{\infty})$ (see \cite[Remark 1.8]{SZI}). A subset which is invariant under both $\alpha$ and $u$ will be called \emph{($\alpha,u$)-invariant}.
\end{remark}
\par We will be interested in anomalous actions with the Rokhlin property. This notion was introduced in \cite[Definition 3.10]{IZU04} for actions of finite groups on unital C$^*$-algebras and later generalised by Nawata and Santiago for non-unital C$^*$-algebras (see \cite{NA16} and \cite{SA15}). Its definition in the setting of anomalous actions is ad verbatim, we will only require it for $\sigma$-unital C$^*$-algebras.
\begin{definition}\label{def:Rokhlin}
An anomalous action $(\alpha,u)$ of a finite group $G$ on a $\sigma$-unital C$^*$-algebra $A$ is said to have the \emph{Rokhlin property}, if there exist projections $p_g\in F(A)$ for $g\in G$ such that:
\begin{enumerate}
    \item $\sum_{g\in G} p_g=1,$ 
    \item $\alpha_g(p_h)=p_{gh}.$
\end{enumerate}
\end{definition}
\begin{remark}
The Rokhlin property also makes sense for $G$-kernels. In this case, a $G$-kernel $\overline{\alpha}$ of a finite group $G$ on a $\sigma$-unital C$^*$-algebra $A$ satisfies the Rokhlin property if for any/some lift $(\alpha,u)$ of $\overline{\alpha}$ there exists a partition of unity of projections $p_g\in F(A)$ for $g\in G$ such that $\alpha_g(p_h)=p_{gh}$ for all $g,h\in G$.
\end{remark}
Our main goal is to classify anomalous actions with the Rokhlin property. To make sense of this question, we first need to introduce equivalence relations for anomalous actions. Before we do so, we start by introducing some notation that will allow us to streamline future definitions.
\begin{definition}\label{def:perturbation}
    Let $(\alpha,u)$ be an anomalous action of a group $G$ on a C$^*$-algebra $A$. If $\mathbbm{v}_g\in U(M(A))$ for $g\in G$, then the pair $(\alpha^{\mathbbm{v}},u^{\mathbbm{v}})$ with
    \begin{equation*}
        \alpha^{\mathbbm{v}}_g=\Ad(\mathbbm{v}_g)\alpha_g,\quad g\in G,
    \end{equation*}
    \begin{equation*}
        u^{\mathbbm{v}}_{g,h}=\mathbbm{v}_g\alpha_g(\mathbbm{v}_h)u_{g,h}\mathbbm{v}_{gh}^*,\quad g,h\in G
    \end{equation*}
    is an anomalous action. We say that $(\alpha^{\mathbbm{v}},u^{\mathbbm{v}})$ is a \emph{unitary perturbation} of $(\alpha,u)$.
\end{definition}
It is a straightforward that $o(\alpha,u)=o(\alpha^{\mathbbm{v}},u^{\mathbbm{v}})$ for any map $\mathbbm{v}:G\rightarrow U(M(A))$.
    \begin{definition}\label{def:equivalences}
    Let $A,B$ be C$^*$-algebras, $(\alpha,u)$ be an anomalous $G$ action on $A$ and $(\beta,v)$ be an anomalous action on $B$. Then we say that
    \begin{enumerate}[(i)]
        \item $(\alpha,u)$ is \emph{conjugate} to $(\beta,v)$ if there exists an isomorphism $\theta:A\rightarrow B$ such that $\alpha_g=\theta\beta_g\theta^{-1}$ and $v_{g,h}=\theta(u_{g,h})$ for all $g,h\in G$.
        \item $(\alpha,u)$ is \emph{cocycle conjugate} to $(\beta,v)$ if there exist unitaries $s_g\in U(M(A))$ for $g\in G$ such that $(\alpha^{s},u^{s})$ is conjugate to $(\beta,v)$. We denote this by $(\alpha,u)\simeq (\beta,v)$.\label{coconj}
        \item If $A$ and $B$ are equal and $(\alpha,u)\simeq(\beta,v)$ with the conjugacy holding through an automorphism $\theta$ such that $K_i(\theta)=\id_{K_i(A)}$ for $i=1,2$,  we say $(\alpha,u)$ and $(\beta,v)$ are \emph{$K$-trivially cocycle conjugate}. We denote this by $(\alpha,u)\simeq_{K}(\beta,v)$. \label{item:Kconj}
        \end{enumerate}
\end{definition}
\par Finally, recall the definition of a unitary one cocycle.
\begin{definition}
Let $\alpha$ be a $(G,\omega)$ action on a C$^*$-algebra $A$. We call a map $v: G\rightarrow U(M(A))$ such that $v_g\alpha_g(v_h)=v_{gh}$ an \emph{$\alpha$-cocyle}. 
\end{definition}
\section{Model actions}\label{sec:UHFactions}
Given a finite group $G$ and $\omega\in Z^3(G,\T)$ a $3$-cocycle, \cite[Theorem C]{EVGI21} constructs a $(G,\omega)$ action on $M_{|G|^\infty}$. This result is based on a construction of Corey Jones in \cite{JON20} which in turn is based on a construction of Vaughan Jones in the setting of von Neumann algebras (\cite{JON79}).
\par In this section, we recall this construction as we will need its specific form to deduce properties of the action. In \cite{JON20}, Corey Jones shows that if $\omega$ is a normalised $3$-cocycle and one has the following data:
\begin{itemize}
    \item A group $\Gamma$ and a surjection $\rho:\Gamma\twoheadrightarrow G$ such that $\rho^*(\omega)$ is a coboundary.
    \item A normalized $2$-cochain $c:\Gamma\times \Gamma\rightarrow \mathbb{T}$ such that $\rho^*(\omega)=dc$.
    \item A C$^*$-algebra $B$ and an action $\pi:\Gamma\rightarrow \Aut(B)$.
\end{itemize}
one can induce a $(G,\omega)$ action on the twisted reduced crossed product $B\rtimes_{\pi,\overline{c}}^r K$, with $K=\ker(\rho)$ (see \cite{BS70} for a reference on twisted crossed products).\footnote{For $c\in C^2(G,\T)$ we denote by $\overline{c}$ the $2$-cochain given by $\overline{c}_{g,h}=\overline{c_{g,h}}$ for $g,h\in G$.} The automorphic data of this $(G,\omega)$ action is given by
\begin{equation}\label{eqn: modelaction}
\theta_g(\sum_{k\in K}a_kv_k)=\sum_{k\in K} c_{\hat{g}k\hat{g}^{-1},\hat{g}^{-1}}\overline{c_{\hat{g},k}}\pi_{\hat{g}}(a_k)v_{\hat{g}k\hat{g}^{-1}},
\end{equation}
for $a_k\in B$, $v_k$ the canonical unitaries in $M(B\rtimes_{\pi,\overline{c}}^r K)$, $g\in G$ and $g\mapsto \hat{g}$ a choice of set theoretic section to $\rho: \Gamma\rightarrow G$.\footnote{Note that (\ref{eqn: modelaction}) is different to the formula in \cite[Lemma 3.2]{JON20}. This is due to our change of conventions when defining anomalous actions.} In fact, given an arbitrary finite group $G$, Corey Jones constructs a finite group $\Gamma$, a surjection $\rho$ and a $2$ cochain $c$ with the conditions needed above and additionally $c|_{\ker(\rho)}=1$. Additionally to $\Gamma$ and $c$, the extra data considered in \cite[Theorem C]{EVGI21} is:
\begin{itemize}
\item $B=\bigotimes_{i\in \N} \B(l^2(\Gamma))$,
\item $\pi=\Ad(\lambda_\Gamma)^{\otimes \infty}$,
\end{itemize}
with $\lambda_\Gamma$ the left regular representation and $\Ad(\lambda_\Gamma)_\gamma(T)=\lambda_\Gamma(\gamma)T\lambda_\Gamma(\gamma)^*$ for all $T\in\B(l^2(\Gamma))$ and $\gamma\in \Gamma$. In this case, the crossed product $B\rtimes_\pi^r K$ is shown to be isomorphic to the UHF algebra $M_{|G|^\infty}$. Corey Jones' construction then yields a $(G,\omega)$ action on $M_{|G|^\infty}$ through (\ref{eqn: modelaction}) for any $\omega\in Z^3(G,\T)$, we denote it by $(s_G^\omega,u_G^{\omega})$.
\begin{proposition}\label{prop:rokhlin}
Let $G$ be a finite group and $\omega\in Z^3(G,\T)$ then
$(s_G^\omega,u_G^{\omega})$ has the Rokhlin property.
\end{proposition}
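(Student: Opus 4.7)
The plan is to write down explicit Rokhlin projections using the tensor product structure of $B=\bigotimes_{i\in\N}\B(\ell^2(\Gamma))$ and the fact that $K=\ker(\rho)$ is automatically normal in $\Gamma$. For each $\gamma\in\Gamma$, let $e_\gamma\in\B(\ell^2(\Gamma))$ denote the rank-one projection onto $\C\delta_\gamma$, and write $e_\gamma^{(n)}$ for the image of $e_\gamma$ in the $n$-th tensor factor of $B$. Fixing a set-theoretic section $g\mapsto\hat g$, I would define
\begin{equation*}
    p_g^{(n)}=\sum_{k\in K}e_{\hat g k}^{(n)}\in B\subseteq A,\qquad g\in G,\ n\in\N.
\end{equation*}
Because $\Gamma=\bigsqcup_{g\in G}\hat g K$ and the $e_\gamma$ form a partition of unity in $\B(\ell^2(\Gamma))$, these are projections with $\sum_{g\in G}p_g^{(n)}=1$ for each $n$.

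Next I would show that the sequence $(p_g^{(n)})_n$ is central in $A=B\rtimes_{\pi,\overline c}^r K$. For any fixed $b\in B$ in the algebraic tensor product, $p_g^{(n)}$ and $b$ are supported on disjoint factors once $n$ is large, so $[p_g^{(n)},b]=0$ eventually; the general case follows by density. For commutation with the canonical unitaries $v_k$ ($k\in K$) one uses $v_k b v_k^*=\pi_k(b)$, so it suffices to verify $\pi_k(p_g^{(n)})=p_g^{(n)}$. Since $\pi_k=\Ad(\lambda_\Gamma(k))^{\otimes\infty}$ satisfies $\Ad(\lambda_\Gamma(k))(e_\gamma)=e_{k\gamma}$, this amounts to $k\hat g K=\hat g K$, which holds because $K\lhd\Gamma$. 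Passing to $A_\infty$ and then to $F(A)$, this produces projections $p_g\in F(A)$ summing to $1$.

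Finally I would verify the equivariance $s_{G,g}^\omega(p_h)=p_{gh}$ directly from the formula (\ref{eqn: modelaction}). Each $p_h^{(n)}$ lies in $B\subset A$, i.e.\ corresponds to the element $a_e=p_h^{(n)}$, $a_k=0$ for $k\neq e$, so the only surviving term in (\ref{eqn: modelaction}) is the $k=e$ one, and that term carries the scalar $c_{\hat g e\hat g^{-1},\hat g^{-1}}\overline{c_{\hat g,e}}=c_{e,\hat g^{-1}}\overline{c_{\hat g,e}}=1$ because $c$ is normalized. Hence $s_{G,g}^\omega(p_h^{(n)})=\pi_{\hat g}(p_h^{(n)})=\sum_{k\in K}e_{\hat g\hat h k}^{(n)}$, and writing $\hat g\hat h=\widehat{gh}\, k_0$ with $k_0\in K$ gives $\{\hat g\hat h k:k\in K\}=\widehat{gh}K$, so this equals $p_{gh}^{(n)}$.

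The only subtle point is step two: verifying that the $(p_g^{(n)})_n$ are asymptotically central in the crossed product rather than just in $B$. This is where one genuinely uses that $K$ is normal in $\Gamma$; fortunately this normality is automatic since $K$ is a kernel, so no additional hypothesis is needed on $\Gamma$ or on the section $\hat{(\cdot)}$. The rest of the argument is a routine bookkeeping exercise with normalized cocycles and tensor-factor indices.
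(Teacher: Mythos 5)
Your proof is correct and follows essentially the same route as the paper: your coset projections $p_g^{(n)}=\sum_{k\in K}e^{(n)}_{\hat g k}$ are exactly the paper's projections $s_G^{\omega}(g)\bigl(r_n(e_K)\bigr)$, where $e_K$ is the projection onto $l^2(K)$ placed in the $n$-th tensor factor. The only cosmetic difference is that the paper obtains equivariance and centrality of the translated projections abstractly from the relation $s_g s_h=\Ad(u_{g,h})s_{gh}$ together with centrality of the single projection $e_K$, whereas you verify these by direct coset computations (correctly using normality of $K=\ker\rho$).
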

\begin{proof}
We use the notation set up in the previous paragraphs. Furthermore, denote by $r_i:\B(l^2(\Gamma))\rightarrow B$ the unital embedding into the $i$-th tensor factor. As $A=B\rtimes_{\pi} K$ is unital $F(A)$ coincides with $A_{\infty}\cap A'$ so it suffices to find a partition of unity $p_g\in A_{\infty}\cap A'$ for $g\in G$ such that $\alpha_g(p_h)=p_{gh}$ for all $g,h\in G$.
\par Let $e_K$ in $\mathcal{B}(l^2(\Gamma))$ be the projection onto $l^2(K)$, that is $$e_K(\sum_{\gamma\in \Gamma}\mu_\gamma  \gamma)=\sum_{\gamma\in K}\mu_\gamma \gamma$$
for any complex scalars $\mu_\gamma$. Let $p_n=r_n(e_K)$ for $n\in \N$. Note that the projection $p=(p_n) \in B_\infty$ commutes with any constant sequence of elements in $B$. Moreover, $p$ commutes with the subalgebra $C^*(K)\subset (B\rtimes K)_\infty$. Indeed, $e_K$ is invariant under $\Ad(\lambda_\Gamma)_{k}$ for any $k\in K$ and therefore for any $n\in \N$ and $k\in K$
\begin{align*}
    v_kp_nv_k^*&=\Ad(\lambda_\Gamma)_{k}^{\otimes \infty}(r_n(e_K))\\
    &=r_n(\Ad(\lambda_\Gamma)_{k}e_K))\\
    &=r_n(e_K)\\
    &=p_n
\end{align*}
Therefore, $p\in A_{\infty}\cap A'$. 
\par We claim that the projections $p_g:=s_G^{\omega}(g)(p)=(s_G^{\omega}(g)(p_n))_{n\in \N}$ form a set of Rokhlin projections. We start by showing that the sum $\sum_{g\in G} s_G^\omega(g)(p)=1$. Let $n\in \N$ and $g\in G$, then as the cocycle $c$ is normalised, it follows from (\ref{eqn: modelaction}) that
\begin{align}
    s_G^{\omega}(g)(p_n)&=\pi_{\hat{g}}(p_n)\label{eqn:rokhlin1}\\
    &=\Ad(\lambda_{\Gamma})^{\otimes\infty}_{\hat{g}}(p_n)\notag \\
    &=\Ad(\lambda_{\Gamma})^{\otimes\infty}_{\hat{g}}(r_n(e_K))\notag \\
    &=r_n(\Ad(\lambda_{\Gamma})_{\hat{g}}(e_K))\notag.
\end{align}
The maps $r_n$ are unital so it suffices to show that $\sum_{g\in G}\Ad(\lambda_{\Gamma})_{\hat{g}}(e_K)=1_{\B(l^2(\Gamma))}$. To see this, let $\gamma\in \Gamma$, $g\in G$ and $\delta_\gamma\in l^2(\Gamma)$ the point mass at $\gamma$, then
\begin{align}
    \Ad(\lambda_{\Gamma})_{\hat{g}}(e_K)(\delta_\gamma)&=\lambda_\Gamma(\hat{g})e_K\lambda_\Gamma(\hat{g}^{-1})(\delta_\gamma)\label{eqn:rokhlin2}\\
    &=\lambda_\Gamma(\hat{g})e_K(\delta_{\hat{g}^{-1}\gamma})\notag\\
    &=\begin{cases}\notag
        \delta_{\gamma}\ \text{if}\ \gamma\in \hat{g}K,\\
        0\ \text{otherwise}.
    \end{cases}
\end{align}
The left $K$ cosets are pairwise disjoint and cover the whole group $\Gamma$. Therefore, it follows that $\sum_{g\in G}\Ad(\lambda_{\Gamma})_{\hat{g}}(e_K)(\delta_\gamma)=\delta_\gamma$  for every $\gamma\in \Gamma$. As the operators $\sum_{g\in G}\Ad(\lambda_{\Gamma})_{\hat{g}}(e_K)$ and $\id_{\B(l^2(\Gamma))}$ coincide on a spanning set of $l^2(\Gamma)$, these operators are equal.
\par It remains to show that for $g,h\in G$ the projections $s_G^{\omega}(g)p_h=p_{gh}$. This follows as $s_G^{\omega}(g)p_h=s_G^{\omega}(g)s_G^{\omega}(h)p=\Ad(u^{\omega}_G(g,h))s_G^{\omega}(gh)p=\Ad(u_G^{\omega}(g,h))p_{gh}=p_{gh}$ where the last equality in the chain holds as $p_{gh}$ commutes with $A$.
\end{proof}
\section{Absorption of model actions}\label{sec:absorption}
\par In this section we show that any Rokhlin anomalous action of a finite group $G$, on an $M_{|G|^\infty}$-stable C$^*$-algebra, absorbs the action 
$$s_G=\bigotimes_{i=0}^{\infty}\Ad(\lambda_G)$$
up to cocycle conjugacy.\footnote{Note that for $G$ finite the C$^*$-algebras $M_{|G|}$ and $B(l^2(G))$ are canonically isomorphic, we identify them throughout this paper.} This result is similar in nature to $(i)\Rightarrow (iii)$ of \cite[Theorem 5.2]{GASA16}. The methods utilised in this chapter are an adaptation of Vaughan Jones' work (\cite{JON80}) to the C$^*$-setting.
\par In his work \cite{SZI,SZII,SZIII}, Szab\'o establishes the theory of strongly self-absorbing C$^*$-dynamical systems as an equivariant version of strongly self-absorbing C$^*$-algebras that were introduced in \cite{TOWI07}. We recall the main definition below.
\begin{definition}\label{def:stronglyselfabsorb}
Let $G$ be a locally compact group. A group action $\gamma$ on a unital, separable C$^*$-algebra $\mathcal{D}$ is called \emph{strongly self-absorbing} if there exists an equivariant isomorphism $\varphi:(\mathcal{D},\gamma)\rightarrow (\mathcal{D}\otimes \mathcal{D},\gamma\otimes\gamma)$ such that there exist unitaries $u_n\in U(\mathcal{D}\otimes \mathcal{D})$ fixed by $\gamma\otimes\gamma$ with 
$$\lim\limits_{n\rightarrow \infty}\|\varphi(a)-u_n(a\otimes 1_{\mathcal{D}})u_n^*\|=0$$
for all $a\in \mathcal{D}$.
\end{definition}
The relevant example of a strongly self-absorbing action for this paper is $s_G$. That $s_G$ is strongly self-absorbing follows as a consequence of \cite[Example 5.1]{SZI}.
\par In  \cite[Theorem 3.7]{SZI} Szab\'o shows equivalent conditions for a cocycle action to tensorially absorb a strongly self-absorbing action. Although Szab\'o's theory only treats the case of cocycle actions absorbing a given strongly self-absorbing group action, many of the arguments follow in exactly the same way when replacing cocycle actions by anomalous actions that may have non-trivial anomaly. The proofs of \cite[Lemma 2.1,\ Theorem 2.6]{SZI} and \cite[Theorem 3.7, Corollary 3.8]{SZI} for example, make no use of the anomaly associated to $(\alpha,u)$ and $(\beta,w)$ being trivial. Under this observation, we can state a specific case of \cite[Corollary 3.8]{SZI}.
\begin{theorem}[{cf. \cite[Theorem 2.8]{SZI}}]\label{thm:Gabor}
 Let $A$ and $\mathcal{D}$ be separable C$^*$-algebras and $G$ a finite group. Assume $(\alpha,u):G\curvearrowright A$ is an anomalous action. Let $\gamma:G\curvearrowright \mathcal{D}$ be a group action such that $(\mathcal{D},\gamma)$ is strongly self-absorbing. If there exists an equivariant and unital $^*$-homomorphism
 $$(\mathcal{D},\gamma)\rightarrow (F(A),\alpha)$$
 then $(A,\alpha,u)$ is cocycle conjugate to $(A\otimes \mathcal{D},\alpha\otimes\gamma,u\otimes 1_{\mathcal{D}})$ through a map $\varphi:A\rightarrow A\otimes \mathcal{D}$ that is approximately unitarily equivalent to $\id_A\otimes 1_{\mathcal{D}}$.
\end{theorem}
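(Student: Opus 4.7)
The plan is to follow Szab\'o's proof of \cite[Corollary 3.8]{SZI} essentially verbatim, with the key observation being that the anomaly $o(\alpha,u)$ plays no role in the argument. Concretely, none of the manipulations of the implementing unitaries $u_{g,h}$ in \cite[Lemma 2.1, Theorem 2.6, Theorem 3.7]{SZI} invoke the $2$-cocycle identity $\alpha_g(u_{h,k})u_{g,hk} = u_{g,h}u_{gh,k}$; they only use the defining conjugacy relation (\ref{eqn:omega1}), namely $\alpha_g\alpha_h = \Ad(u_{g,h})\alpha_{gh}$, which continues to hold in the anomalous setting. Thus the transition from cocycle actions to anomalous actions is transparent to the arguments.

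The first step is to upgrade the given equivariant unital $^*$-homomorphism $\iota\colon (\mathcal{D},\gamma)\to (F(A),\alpha)$ to a more flexible McDuff-type embedding. Using strong self-absorption of $(\mathcal{D},\gamma)$ and iterating $\iota$, one produces a unital equivariant $^*$-homomorphism from $(\mathcal{D},\gamma)$ into a sequence algebra of $F(A)$ whose image is approximately central in $A$. The asymptotic failure of equivariance caused by $u$ is confined to the annihilator $A_\infty\cap A^\perp$, in line with Remark \ref{rem:innertrivialoncentral}, and thus vanishes in $F(A)$.

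The second step is the one-sided equivariant intertwining of \cite[Theorem 2.6]{SZI}. Applying the reindexing trick together with the approximate unitary equivalence of $\mathcal{D}\to\mathcal{D}\otimes\mathcal{D}$ to $\id_{\mathcal{D}}\otimes 1_{\mathcal{D}}$ via $\gamma\otimes\gamma$-invariant unitaries (provided by Definition \ref{def:stronglyselfabsorb}), one constructs an isomorphism $\varphi\colon A\to A\otimes\mathcal{D}$ approximately unitarily equivalent to $\id_A\otimes 1_{\mathcal{D}}$. The equivariance is set up so that there exist unitaries $s_g\in U(M(A))$ with $\varphi\circ\alpha_g^{s}=(\alpha_g\otimes\gamma_g)\circ\varphi$ and $\varphi(u^{s}_{g,h})=u_{g,h}\otimes 1_{\mathcal{D}}$, which is exactly cocycle conjugacy in the sense of Definition \ref{def:equivalences}(\ref{coconj}).

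The main obstacle is largely bookkeeping: one must verify, line by line, that Szab\'o's computations involving $u_{g,h}$ indeed rely only on (\ref{eqn:omega1}) and never on the vanishing of $o(\alpha,u)$. Once that verification is complete, the anomalous refinement stated here follows at once from the cocycle-action version of \cite[Corollary 3.8]{SZI}.
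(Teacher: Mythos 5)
Your proposal matches the paper's own treatment: the paper proves Theorem \ref{thm:Gabor} precisely by observing that Szab\'o's arguments in \cite[Lemma 2.1, Theorem 2.6, Theorem 3.7, Corollary 3.8]{SZI} never use triviality of the anomaly (only the relation $\alpha_g\alpha_h=\Ad(u_{g,h})\alpha_{gh}$, with any appearance of the cocycle identity harmless since the defect is a central scalar), and then states the result as a special case of \cite[Corollary 3.8]{SZI}. Your outline of the intermediate steps is consistent with that route, so this is essentially the same proof.
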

\par We still require a few more results before we can achieve the model action absorption. These are based on known results in the setting of finite group actions on unital C$^*$-algebras. These generalise line by line to anomalous actions of finite groups on unital C$^*$-algebras, we adapt the arguments also for non-unital C$^*$-algebras.
\begin{lemma}[{cf. \cite[Theorem 3.3]{HIWI07}}]\label{lem:embeddingintofixedpoints}
Let $A$ be a C$^*$-algebra, $G$ be a finite group and $(\alpha,u)$ be an anomalous action of $G$ on $A$ with the Rokhlin property. If $B=B^*$ is a separable $(\alpha,u)$-invariant subset of $A_\infty$ and there exists a unital $^*$-homomorphism $M\rightarrow F(B,A_{\infty})$ for some separable, unital C$^*$-algebra $M$, then there exists a unital $^*$-homomorphism $M\rightarrow F(B,A_\infty)^{\alpha}$.
\end{lemma}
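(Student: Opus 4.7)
The strategy is the classical Rokhlin averaging trick, adapted from the unital action setting of \cite[Theorem~3.3]{HIWI07} to the present anomalous and possibly non-unital framework. One may (and we do) assume that $B$ contains $A$, so that the induced action of $G$ on $F(B, A_\infty)$ is a genuine group action rather than merely anomalous: by Remark~\ref{rem:innertrivialoncentral}, the unitaries $u_{g,h}$ then act trivially on $F(B, A_\infty)$.

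Given this, the plan is to produce projections $q_g \in F(B, A_\infty)$ satisfying $\sum_{g \in G} q_g = 1$, $\alpha_g(q_h) = q_{gh}$, and $q_e \varphi(m) = \varphi(m) q_e$ for every $m \in M$, and then to define
\[
\psi(x) \ := \ \sum_{g \in G} q_g \, \alpha_g(\varphi(x)) \, q_g, \qquad x \in M.
\]
Unitality of $\psi$ follows from $\sum_g q_g = 1$ and unitality of $\varphi$. Equivariance $\alpha_h \circ \psi = \psi$ follows from the reindexing $g \mapsto h^{-1} g$ combined with $\alpha_h(q_g) = q_{hg}$. That $\psi$ is a $^*$-homomorphism reduces, via the orthogonality $q_g q_h = \delta_{g,h} q_g$, to the statement that each $q_g$ commutes with $\alpha_g(\varphi(M))$; and this is obtained by applying $\alpha_g$ to the commutation of $q_e$ with $\varphi(M)$.

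The crux of the proof is the construction of the $q_g$. Starting from the Rokhlin projections $p_g \in F(A)$ provided by the hypothesis on $(\alpha,u)$, I would lift $p_e$ to a representing sequence $(x^{(n)})_n$ in $A_\infty \cap A'$. Using separability of $B$ and $M$, fix countable dense subsets $B_0 \subseteq B$ and $M_0 \subseteq M$ together with representing sequences for each $\varphi(m)$, $m \in M_0$, and apply a Kirchberg-type $\epsilon$-test to replace $(x^{(n)})_n$ by a sequence that additionally approximately commutes with every $b \in B_0$ and with the chosen representing sequences of $\varphi(M_0)$, while still representing a projection in $F(A)$ orthogonal to its $\alpha$-translates with unit sum. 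A functional-calculus perturbation then yields an honest projection $q_e \in F(B, A_\infty)$ commuting with $\varphi(M)$ whose image under the natural map $F(B, A_\infty) \to F(A)$ is $p_e$. Setting $q_g := \alpha_g(q_e)$ then transfers the remaining Rokhlin relations from those of the $p_g$.

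The main obstacle is this combined reindexing step: one must simultaneously lift the Rokhlin partition of unity from $F(A)$ into the smaller commutant $A_\infty \cap B'$ \emph{and} arrange exact commutation with $\varphi(M)$ modulo $A_\infty \cap B^\perp$. Each ingredient is routine in isolation (each is a direct diagonal argument in the relevant sequence algebra), but combining them cleanly requires the Kirchberg $\epsilon$-test machinery, exploiting the countable density of $B_0$ and $M_0$. Once these projections are in place, the verifications for $\psi$ are purely formal.
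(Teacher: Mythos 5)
Your proposal is essentially the paper's argument: both use the Rokhlin property plus a reindexing/epsilon-test argument to produce a Rokhlin partition of unity compatible with $B$ and with (a lift of) the given morphism, and then average the morphism over $G$ against it — the paper with positive contractions $f_g\in A_\infty\cap S'$, $S=B\cup\bigcup_g\alpha_g(\psi_0(M))\cup\bigcup_g\alpha_g(\psi_0(M))^*$, and the one-sided sum $\sum_g\alpha_g(\psi_0(m))f_g$ checked modulo $A_\infty\cap B^\perp$, you with exact projections $q_g\in F(B,A_\infty)$ and the compression $\sum_g q_g\alpha_g(\varphi(x))q_g$, which is the same averaging trick. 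The one caveat is your opening reduction ``one may assume $A\subseteq B$'': this is not a legitimate reduction (enlarging $B$ shrinks $F(B,A_\infty)$, so the hypothesised unital morphism need not survive), but it is also unnecessary, since the triviality of $\Ad(u_{g,h})$ on $F(B,A_\infty)$ — hence the group-action structure you need — already follows from $B$ being self-adjoint and $(\alpha,u)$-invariant, which is exactly what the paper's proof invokes via Remark \ref{rem:innertrivialoncentral}.
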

\begin{proof}
Fix a unital homomorphism $\psi: M\rightarrow F(B,A_{\infty})$ and choose a linear lift $\psi_0:M\rightarrow A_{\infty}\cap B'$. Then one has that
\begin{enumerate}[(i)]
\item $(\psi_0(m)\psi_0(m')-\psi_0(mm'))b=0,\quad \forall m,m'\in M, b\in B,$\label{enum:lift1}
\item $(\psi_0(m^*)-\psi_0(m)^*)b=0,\quad \forall m\in M, b\in B,$\label{enum:lift2}
\item $\psi_0(1)b-b=0,\quad \forall b\in B.$\label{enum:lift3}
\end{enumerate}
Let $S=B\cup_{g\in G}\alpha_g(\psi_0(M))\cup_{g\in G}\alpha_g(\psi_0(M))^*$ so $S=S^*$. By the Rokhlin property followed by a standard reindexing argument, there exist positive contractions $f_g\in A_{\infty}\cap S'$ such that 
\begin{enumerate}[(iv)]
\item $(\alpha_g(f_h)-f_{gh})a=0,\quad \forall g,h\in G, a\in S,$\label{enum:rokhlin1}
\item $(\sum_{g\in G} f_g)a-a=0\quad \forall a\in S,$\label{enum:rokhlin2}
\item $f_gf_ha-\delta_{g,h}a=0\quad \forall g,h\in G, a\in S.$\label{enum:rokhlin3}
\end{enumerate}
Now consider the linear mapping $\varphi:M\rightarrow A_{\infty}\cap B'$ given by
$$\varphi(m)=\sum_{g\in G}\alpha_g(\psi_0(m))f_g$$
Firstly, for $m,m'\in M$ and $b\in B$ it follows from \ref{enum:lift1} and \ref{enum:rokhlin3}
\begin{align*}
    \varphi(m)\varphi(m')b&=\sum_{g,h\in G}\alpha_g(\psi_0(m))f_g\alpha_h(\psi_0(m'))f_hb\\
    &=\sum_{g,h\in G} \alpha_g(\psi_0(m))\alpha_h(\psi_0(m'))f_gf_hb\\
    &=\sum_{g\in G}\alpha_g(\psi_0(m))\alpha_g(\psi_0(m'))bf_g\\
    &=\sum_{g\in G}\alpha_g(\psi_0(mm'))bf_g\\
    &=\sum_{g\in G}\alpha_g(\psi_0(mm'))f_gb\\
    &=\varphi(mm')b
\end{align*}
Also for $k\in G, m\in M$ and $b\in B$ it follows using \ref{enum:rokhlin1} that
\begin{align*}
\alpha_k(\varphi(m))b&=\sum_{g\in G}\alpha_k\left(\alpha_g(\psi_0(m))\right)\alpha_k(f_g)b\\
&=\sum_{g\in G}\Ad(u_{k,g})(\alpha_{kg}(\psi_0(m)))f_{kg}b\\
&=\sum_{g\in G}\Ad(u_{k,g})(\alpha_{kg}(\psi_0(m)))bf_{kg}\\
&=\varphi(m)b.
\end{align*}
Where in the last line we have used that $B$ is $u$ invariant and so the observation in Remark \ref{rem:innertrivialoncentral} applies. Therefore, the map
$$m\mapsto \varphi(m)+A_\infty\cap B^{\perp}$$
defines a homomorphism from $M$ into $(F(B,A_\infty))^\alpha$. This homomorphism is unital through combining \ref{enum:lift3} and \ref{enum:rokhlin2} and $^*$-preserving by \ref{enum:lift2}.
\end{proof}
\par In the next lemma, recall that if $\alpha$ is a action of a group $G$ on a C$^*$-algebra $A$, an $\alpha$-cocycle is a family of unitaries $v_g\in U(M(A))$ for $g\in G$ such that $v_g\alpha_g(v_h)=v_{gh}$.
\begin{lemma}[cf. {\cite[Lemma III.1]{HEJO83}}]\label{lemma: stability}
Let $A$ be a separable C$^*$-algebra and $G$ a finite group. Let $(\alpha,u)$ be an anomalous action of $G$ on $A$ with the Rokhlin property. Let $B=B^*$ be a separable $(\alpha,u)$-invariant subset of $A_\infty$. For any $\alpha$-cocycle $v_g$ for the action induced by $\alpha$ on $F(B,A_\infty)$ there exists a unitary $u\in F(B,A_\infty)$ with $u^*\alpha_g(u)=v_g$.
\end{lemma}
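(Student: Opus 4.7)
The plan is to adapt the classical Rokhlin coboundary construction and build the trivializing unitary as a Rokhlin-weighted sum of the inverse cocycle. First, I would choose contractive lifts $\tilde v_g \in A_\infty \cap B'$ of each $v_g$ and then, following the same reindexing strategy used in the proof of Lemma \ref{lem:embeddingintofixedpoints}, apply the Rokhlin property of $(\alpha,u)$ to the separable, $\alpha$-invariant set
$$S = B \cup \bigcup_{g,h \in G} \{\alpha_g(\tilde v_h),\, \alpha_g(\tilde v_h^*)\}$$
to obtain positive contractions $p_g \in A_\infty \cap S'$ such that $(\sum_g p_g - 1)a = 0$, $(p_g p_h - \delta_{g,h} p_g)a = 0$ and $(\alpha_g(p_h) - p_{gh})a = 0$ for all $a \in S$ and $g,h \in G$. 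In particular, these identities hold modulo $A_\infty \cap B^\perp$.

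With such projections at hand, I would set
$$w = \sum_{g \in G} \tilde v_g^* p_g \in A_\infty \cap B'$$
and denote its image in $F(B,A_\infty)$ by $u$. Using the orthogonality of the $p_g$ modulo $A_\infty \cap B^\perp$, their commutation with the $\tilde v_g$ (which lie in $S$), and $\tilde v_g^*\tilde v_g \equiv 1 \pmod{A_\infty \cap B^\perp}$, both $ww^*$ and $w^*w$ reduce to $\sum_g p_g \equiv 1$, so $u$ is a unitary in $F(B,A_\infty)$.

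To verify $u^*\alpha_g(u) = v_g$, I would invoke the Rokhlin equivariance together with the cocycle identity $v_{gk} = v_g \alpha_g(v_k)$, lifted to $\tilde v_{gk}\alpha_g(\tilde v_k^*) \equiv \tilde v_g \pmod{A_\infty \cap B^\perp}$. Since $\alpha_g(\tilde v_k^*) \in S$ and therefore commutes modulo $A_\infty \cap B^\perp$ with $p_{gk}$, one computes
$$w^*\alpha_g(w) = \sum_{h,k\in G} p_h \tilde v_h \alpha_g(\tilde v_k^*) p_{gk} \equiv \sum_{k\in G} p_{gk} \tilde v_{gk}\alpha_g(\tilde v_k^*) \equiv \tilde v_g \sum_{k\in G} p_{gk} \equiv \tilde v_g \pmod{A_\infty \cap B^\perp},$$
which descends to the required identity in $F(B,A_\infty)$.

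The main technical hurdle is the reindexing step: one must arrange that the Rokhlin projections commute with the enlarged set $S$ and satisfy all the Rokhlin relations modulo $A_\infty \cap S^\perp$ (and hence modulo $A_\infty \cap B^\perp$), rather than merely modulo $A_\infty \cap A^\perp$. This is precisely the same kind of refinement carried out in Lemma \ref{lem:embeddingintofixedpoints}, so it introduces no new ideas beyond careful bookkeeping; note in particular that since $\alpha$ induces a genuine action on $F(B,A_\infty)$ (as recalled in Remark \ref{rem:innertrivialoncentral}), the anomaly $u$ plays no role in the computation once we pass to the quotient.
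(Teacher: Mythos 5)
Your proposal is correct and follows essentially the same route as the paper: lift the cocycle, enlarge to the separable self-adjoint set $S$ containing $B$ and the translates $\alpha_h(\tilde v_g^{(*)})$, obtain approximately equivariant, orthogonal positive contractions commuting with $S$ via the Rokhlin property plus reindexing, and take the Rokhlin-weighted sum, verifying unitarity and the coboundary identity modulo $A_\infty\cap B^\perp$. The only (cosmetic) difference is the adjoint convention: the paper forms $\sum_g v_g' f_g$ and obtains $u\alpha_g(u^*)=v_g$, whereas you form $\sum_g \tilde v_g^* p_g$ to land directly on $u^*\alpha_g(u)=v_g$.
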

\begin{proof}
let $v_g\in U(F(B,A_\infty))$ be an $\alpha$-cocycle. Choosing lifts $v_g'\in A_{\infty}\cap B'$ for $v_g$ one has
\begin{enumerate}[(i)]
    \item $v_g'(v_g')^*b-b=0,\quad \forall g\in G, b\in B,$\label{enum2:lift1}
    \item $(v_g')^*v_g'b-b=0,\quad \forall g\in G, b\in B,$\label{enum2:lift2}
    \item $v_g'\alpha_g(v_h')b-v_{gh}'b=0,\quad \forall g,h\in G, b\in B.$ \label{enum2:lift3}
\end{enumerate}
Let $S=B\cup\{\alpha_h(v_g'), \alpha_h(v_g')^* :g,h\in G\}$. As in the previous lemma, one may apply the Rokhlin property combined with a reindexing argument to get a family of positive elements $f_g\in A_{\infty}\cap S'$ such that 
\begin{enumerate}[(iv)]
    \item $(\alpha_g(f_h)-f_{gh})a=0,\quad \forall g,h\in G, a\in S,$\label{enum2:rokhlin1}
    \item $\sum_{g\in G}f_ga-a=0, \quad \forall a\in S,$\label{enum2:rokhlin2}
    \item $f_gf_ha-\delta_{g,h}a=0\quad \forall g,h\in G, a\in S.$\label{enum2:rokhlin3}
\end{enumerate}

Let $u=\sum_{g\in G}v_g'f_g \in A_{\infty}\cap B'$. Then for any $b\in B$ by \ref{enum2:lift2},\ref{enum2:rokhlin2} and \ref{enum2:rokhlin3} it follows that
\begin{align*}
u^*ub&=\sum_{g,h\in G}f_g(v_g')^*v_h'f_hb\\
&=\sum_{g,h\in G}(v_g')^*v_h'bf_gf_h\\
&=\sum_{g\in G}(v_g')^*v_g'bf_g\\
&=b.
\end{align*}
Similarly $uu^*b=b$ for any $b\in B$. Moreover, \ref{enum2:lift3},\ref{enum2:lift1},\ref{enum2:rokhlin1} and \ref{enum2:rokhlin2} imply that for $b\in B$ and $g\in G$
\begin{align*}
u\alpha_g(u^*)b&=\sum_{h,k}v_h'f_h\alpha_g(f_k)\alpha_g(v_k')^*b\\
&=\sum_{h,k}v_h'\alpha_g(v_k')^*bf_h\alpha_g(f_k)\\ 
&=\sum_k v_{gk}'\alpha_g(v_k')^*bf_{gk}\\
&=\sum_k v_{g}'bf_{gk}\\
&=v_g'b,
\end{align*}
Therefore, by passing to the quotient, $u$ defines a unitary in $F(B,A_\infty)$ such that $u\alpha_g(u^*)=v_g$ for all $g\in G$.
\end{proof}
The proof of the next lemma is based on the proof of \cite[Proposition 3.4.1]{JON80}.
\begin{lemma}\label{lemma: equivariantembedding}
Let $G$ be a finite group and $A$ be a separable C$^*$-algebra such that $A\cong A\otimes \M_{|G|^\infty}$. Let $(\alpha,u)$ be an anomalous action with the Rokhlin property of $G$ on $A$ . Then there exists a $G$-equivariant unital embedding
$$(\M_{|G|^\infty}, s_G)\rightarrow (F(A),\alpha).$$
\end{lemma}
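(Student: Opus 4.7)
The plan is to manufacture matrix units for $(M_{|G|^\infty}, s_G)$ inside $F(A)$ by combining a trivially-acting copy of $M_{|G|^\infty}$ in the fixed-point subalgebra with the Rokhlin projections, which will serve as a substitute for the left regular representation.

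First I would fix Rokhlin projections $\{p_g\}_{g \in G} \subset F(A)$ (automatically pairwise orthogonal, since projections in a C$^*$-algebra summing to a projection must be mutually orthogonal) together with a separable $(\alpha,u)$-invariant subset $B \subset A_\infty$ containing $A$ and lifts of the $p_g$. Since $A \cong A \otimes M_{|G|^\infty}$, the self-absorption of $M_{|G|^\infty}$ together with a standard reindexing argument produces a unital embedding $M_{|G|^\infty} \hookrightarrow F(B, A_\infty)$; applying Lemma \ref{lem:embeddingintofixedpoints} upgrades this to a unital embedding
\[ \psi: M_{|G|^\infty} \to F(B, A_\infty)^{\alpha}. \]
Identifying $M_{|G|^\infty}$ with $\bigotimes_{n=1}^{\infty} M_{|G|}$ and writing $\psi^{(n)}_{g,h}$ for the image of the $(g,h)$-matrix unit of the $n$-th tensor factor, the families $\{\psi^{(n)}_{g,h}\}_{g,h}$ for distinct $n$ mutually commute, are fixed by $\alpha$, and each $\psi^{(n)}_{g,h}$ commutes with every $p_k$ by construction of $B$.

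The central construction is to set, for each $n \in \N$ and $g,h \in G$,
\[ E^{(n)}_{g,h} := \sum_{k \in G} \psi^{(n)}_{k^{-1}g,\, k^{-1}h} \, p_k. \]
The matrix unit relations $E^{(n)}_{g,h} E^{(n)}_{h',l} = \delta_{h,h'} E^{(n)}_{g,l}$, $(E^{(n)}_{g,h})^* = E^{(n)}_{h,g}$, and $\sum_g E^{(n)}_{g,g} = 1$ follow from pairwise orthogonality of the $p_k$ combined with the matrix unit relations for $\psi^{(n)}$; similarly, commutativity of $\{E^{(n)}_{\cdot,\cdot}\}$ with $\{E^{(m)}_{\cdot,\cdot}\}$ for $n \neq m$ reduces to commutativity of $\psi^{(n)}$ and $\psi^{(m)}$. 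The key equivariance computation is
\[ \alpha_{k'}(E^{(n)}_{g,h}) = \sum_{k} \psi^{(n)}_{k^{-1}g,\, k^{-1}h}\, p_{k'k} = \sum_{\ell} \psi^{(n)}_{\ell^{-1}k'g,\, \ell^{-1}k'h}\, p_{\ell} = E^{(n)}_{k'g,\, k'h}, \]
where I used $\alpha$-invariance of $\psi^{(n)}_{g,h}$ (since $\psi$ lands in the fixed-point subalgebra) and $\alpha_{k'}(p_k) = p_{k'k}$. This matches precisely the action of $\Ad(\lambda_G)$ on the matrix units of $M_{|G|}$, so the assignment $e^{(n)}_{g,h} \mapsto E^{(n)}_{g,h}$ extends to the desired $G$-equivariant unital embedding $(M_{|G|^\infty}, s_G) \to (F(A), \alpha)$.

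The main technical obstacle is arranging for the Rokhlin projections and the embedding $\psi$ to coexist inside a single sequence algebra so that the product $\psi^{(n)}_{k^{-1}g, k^{-1}h} p_k$ makes sense. This is handled by enlarging $B$ to contain representatives of the $p_g$ and working in $F(B, A_\infty)$, then descending to $F(A)$ via the natural quotient map; this is in close analogy with the preceding proofs in this section and relies on Remark \ref{rem:innertrivialoncentral} to ensure $(\alpha,u)$-invariance of $B$.
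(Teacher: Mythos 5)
Your argument is correct, but it is organised differently from the paper's proof. The paper builds the copy of $(\M_{|G|^\infty},s_G)$ one tensor factor at a time: by induction it chooses a separable $(\alpha,u)$-invariant set $S$ containing $A$ and the previously constructed copies, embeds a single $\B(l^2(G))$ into $F(S,A_\infty)^\alpha$ via Lemma \ref{lem:embeddingintofixedpoints}, observes that the permutation unitaries $v_g=\sum_h e'_{gh,h}$ form an $\alpha$-cocycle, and invokes the stability Lemma \ref{lemma: stability} to produce a unitary $u$ untwisting this cocycle, so that $f_{g,h}=u^*e'_{g,h}u$ transforms as $\alpha_k(f_{g,h})=f_{kg,kh}$. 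You instead embed all of $\M_{|G|^\infty}$ into the fixed points in one shot (which gives the commuting families $\psi^{(n)}_{g,h}$ for free, removing the need for the induction) and then twist by the Rokhlin projections directly, via $E^{(n)}_{g,h}=\sum_k\psi^{(n)}_{k^{-1}g,k^{-1}h}p_k$. Note that this is exactly conjugation of the fixed matrix units by the unitary $\sum_k v_k p_k$, i.e.\ you have inlined the proof of Lemma \ref{lemma: stability} (whose untwisting unitary is built from Rokhlin elements in precisely this way), so the two arguments run on the same circle of ideas; yours is shorter and more explicit, while the paper's factors the twist through a reusable stability lemma. Your verifications (matrix-unit relations from orthogonality of the $p_k$, equivariance matching $\Ad(\lambda_G)_k(e_{g,h})=e_{kg,kh}$) are correct, and you correctly identify the one genuine technical point: the lifts of the $p_g$ must be placed into the separable $(\alpha,u)$-invariant set $B$ \emph{before} constructing $\psi$, so that lifts of the $\psi^{(n)}_{g,h}$ lie in $A_\infty\cap B'$ and commute with them, after which all relations pass to $F(A)$ because $A\subset B$ gives $A_\infty\cap B'\subset A_\infty\cap A'$ and $A_\infty\cap B^\perp\subset A_\infty\cap A^\perp$, with $A_\infty\cap A^\perp$ an ideal in $A_\infty\cap A'$; spelling out this last containment argument (rather than appealing to a ``natural quotient map'' in passing) is the only detail I would ask you to add.
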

\begin{proof}
To prove this we inductively construct unital equivariant $^*$-homomorphisms $\phi_n:(\B(l^2(G)),\Ad(\lambda_G))\rightarrow (F(A),\alpha)$ for $n\in \N$ with commuting images. Then the map defined by $a_1\otimes \dots \otimes a_n \otimes \dots \longmapsto \prod_{i\in \N}\phi_i(a_i)$ will induce a $s_G$ to $\alpha$ equivariant map into $F(A)$.
\par Suppose $\phi_1,\phi_2,\dots,\phi_n:(\B(l^2(G)),\Ad(\lambda_G))\rightarrow (F(A),\alpha)$ are equivariant maps 
with commuting images and let $\psi_i:B(l^2(G))\rightarrow A_\infty\cap A'$ be linear lifts of $\phi_i$ for $1\leq i\leq n$, then
\begin{enumerate}[(i)]
\item $\psi_i(m)\psi_j(m')a-\psi_j(m')\psi_i(m)a=0,\quad \forall a\in A, m,m'\in B(l^2(G)),\newline 1\leq i\neq j\leq n$,\label{proofenumcommute}
\item $\alpha_g(\psi_i(m))a-\psi_i(\lambda_G(g)(m))a=0,\quad \forall a\in A, m\in B(l^2(G)), 1\leq i \leq n, g\in G$,\label{proofenumequiv}
\item $\psi_i(m)^*a-\psi_i(m)^*a=0,\quad \forall a\in A, m\in B(l^2(G)),1\leq i\leq n$,
\item $\psi_i(1)a-a=0,\quad \forall \in A, 1\leq i\leq n$.\label{proofenumunital}
\end{enumerate}
Let
$$S=\{\psi_i(m)a:  m\in B(l^2(G)),\ a\in A,\ 1\leq i\leq n\}.$$ 
Then $S$ is separable, $S=S^*$ and $S$ is $(\alpha,u)$ invariant. We check that $u_{g,h}S\subset S$ for all $g,h\in G$, the remaining conditions follow similarly. For $a\in A$, $m\in M$ and $1\leq i \leq n$ letting $m'=\Ad(\lambda_G)_{h^{-1}g^{-1}}(m)$ one has that
\begin{align*}
 u_{g,h}\psi_i(m)a&=u_{g,h}\psi_i(\Ad(\lambda_{G})_{gh}(m'))a\\
 &\overset{\ref{proofenumequiv}}{=}u_{g,h}\alpha_{gh}(\psi_i(m'))a\\
 &=\alpha_g\alpha_h(\psi_i(m'))u_{g,h}a\\
 &\overset{\ref{proofenumequiv}}{=}\psi_i(\Ad(\lambda_G)_{gh}(m'))u_{g,h}a \in S.
\end{align*}
As $A\cong A\otimes M_{|G|^\infty}$ pick a unital embedding from $B(l^2(G))$ into $F(S,A_\infty)$. (As $A\otimes \M_{|G|^\infty}\cong A$ there exists a unital embedding of $B(l^2(G))$ into $F(A)$ by \cite[Theorem 2.2]{TOWI07}. Moreover by reindexing one can also choose a homomorphism as stated.) It follows from Lemma \ref{lem:embeddingintofixedpoints} that there exists a unital embedding $B(l^2(G))\rightarrow F(S,A_{\infty})^{\alpha}$. Let $(e'_{g,h})_{g,h\in G}$ in $F(S,A_{\infty})^\alpha$ be the images of $e_{g,h}$ under this unital embedding. The permutation unitary $v_g=\sum_{h\in G} e'_{gh,h}$ gives a unitary representation of $G$ on $F(S,A_{\infty})^\alpha$ and as $\alpha_g(v_h)=v_h$ it follows that $v_g$ is an $\alpha$-cocycle. Therefore, by Lemma \ref{lemma: stability} there exists a unitary $u\in F(S,A_{\infty})$ such that $u\alpha_g(u^*)=v_g$. Now $f_{g,h}=u^*e'_{g,h}u$ for $g,h\in G$ is a set of matrix units such that
\begin{align*}
\alpha_k(f_{g,h})&=\alpha_k(u^*)e'_{g,h}\alpha_k(u)\\
&=u^*v_ke'_{g,h}v_k^*u\\
&=u^*(\sum_{h',h''\in G}e'_{kh',h'}e'_{g,h}e'_{h'',kh''})u\\
&=u^*(e'_{kg,kh})u\\
&=f_{kg,kh}.
\end{align*}
Hence the $^*$-homomorphism
\begin{align*}
\phi_{n+1}:\B(l^2(G))&\rightarrow F(S,A_{\infty})\\
e_{g,h}&\mapsto f_{g,h}.
\end{align*}
defines an $\Ad(\lambda_G)$ to $\alpha$ equivariant $^*$-homomorphisms. Moreover, the image of $\phi_{n+1}$ commutes with $\phi_i$ for all $1\leq i\leq n$. Considering $\phi_{n+1}$ as a unital equivariant homomorphism into $A_{\infty}\cap A'/A_{\infty}\cap A^{\perp}$ the induction argument is complete.
\end{proof}
We have collected all the necessary ingredients to prove the model action absorption.
\begin{proposition}\label{prop:absorption}
Let $G$ be a finite group and $A$ a separable C$^*$-algebra such that $A \cong A\otimes \M_{|G|^\infty}$. Let $(\alpha,u)$ be a $(G,\omega)$ action on $A$ with the Rokhlin property. Then $(\alpha,u)$ and $(\alpha\otimes s_G,u\otimes 1_{\M_{|G|^\infty}})$ are cocycle conjugate through an isomorphism that is approximately unitarily equivalent to $\id_A\otimes 1_{\M_{|G|^\infty}}$.
\end{proposition}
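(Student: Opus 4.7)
The proposition should follow essentially by combining the two preceding main results: the equivariant embedding lemma (Lemma \ref{lemma: equivariantembedding}) and Szabó's absorption theorem in the form given by Theorem \ref{thm:Gabor}. The plan is therefore to verify that the hypotheses of Theorem \ref{thm:Gabor} are met with $\mathcal{D} = M_{|G|^\infty}$ and $\gamma = s_G$, and then invoke it directly.

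First, I would note that $(M_{|G|^\infty}, s_G)$ is a strongly self-absorbing $C^*$-dynamical system in the sense of Definition \ref{def:stronglyselfabsorb}; this is the example recorded after the definition (referring to \cite[Example 5.1]{SZI}). Second, by Lemma \ref{lemma: equivariantembedding}, the hypothesis $A \cong A \otimes M_{|G|^\infty}$ together with the Rokhlin property of $(\alpha,u)$ yields an equivariant unital $^*$-homomorphism
\begin{equation*}
  (M_{|G|^\infty}, s_G) \longrightarrow (F(A), \alpha).
\end{equation*}
Note that although the action $\alpha$ on $F(A)$ comes from an anomalous action, it is a genuine group action on $F(A)$ by Remark \ref{rem:innertrivialoncentral}, so the target of this embedding makes sense and the equivariance is unambiguous.

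With these two ingredients in place, Theorem \ref{thm:Gabor} applies directly to the anomalous action $(\alpha, u)$, providing a cocycle conjugacy between $(A, \alpha, u)$ and $(A \otimes M_{|G|^\infty}, \alpha \otimes s_G, u \otimes 1_{M_{|G|^\infty}})$ implemented by a $^*$-homomorphism $\varphi : A \to A \otimes M_{|G|^\infty}$ that is approximately unitarily equivalent to $\id_A \otimes 1_{M_{|G|^\infty}}$. This delivers exactly the conclusion of Proposition \ref{prop:absorption}.

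The only subtlety I would flag is justifying that Theorem \ref{thm:Gabor} truly extends from the cocycle-action case of \cite[Corollary 3.8]{SZI} to the anomalous setting; but this is precisely the remark made in the text preceding Theorem \ref{thm:Gabor}, namely that Szabó's proofs (of \cite[Lemma 2.1, Theorem 2.6, Theorem 3.7, Corollary 3.8]{SZI}) never use triviality of the anomaly, so the statement is applied as a black box. Consequently, the proof amounts to little more than citing Lemma \ref{lemma: equivariantembedding} and Theorem \ref{thm:Gabor} in sequence, with no further computation required.
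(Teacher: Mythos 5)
Your proposal is correct and follows exactly the paper's own argument: the proof in the paper is just the two-step citation of Lemma \ref{lemma: equivariantembedding} to obtain the equivariant unital embedding $(\M_{|G|^\infty},s_G)\rightarrow (F(A),\alpha)$, followed by an application of Theorem \ref{thm:Gabor}. Your extra remarks on $s_G$ being strongly self-absorbing and on the anomalous extension of Szab\'o's results match the justifications already given in the surrounding text.
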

\begin{proof}
    By Lemma \ref{lemma: equivariantembedding} there exists a $G$-equivariant unital embedding $(\M_{|G|^\infty},s_G)\rightarrow (F(A),\alpha)$. 
    Thus the result follows from Theorem \ref{thm:Gabor}.
\end{proof}
\section{Classification}\label{sec:classification}
We now discuss the abstract approach to bootstrapping the classification of group actions on a given class of C$^*$-algebras to a classification of anomalous actions. This method is a generalisation of that used by Connes in \cite[Section 6]{CO77}, a similar strategy was recently used in \cite{IZU23} to classify $G$-kernels of poly-$\mathbb{Z}$ groups on $\mathcal{O}_2$.
\par Before proceeding with the result, we set up notation. For a group $G$, we say ``$(\alpha,u)$ is an anomalous $G$-action on $A$" and ``$(A,\alpha,u)$ is an anomalous $G$-C$^*$-algebra" interchangebly. Let $\Lambda$ be a functor whose domain category is the category of C$^*$-algebras (denoted \textbf{C*alg}). We say $\Lambda$ is \emph{invariant under approximate unitary equivalence} if $\Lambda(\alpha)=\Lambda(\theta)$ whenever $\alpha\au\theta$. We also say that $\Lambda$ restricted to a subcategory $\mathcal{C}\subset\operatorname{\textbf{C*alg}}$ \emph{is full on isomorphisms} , if whenever $\Phi\in \Hom(\Lambda(A),\Lambda(B))$ is an isomorphism for $A,B\in \mathcal{C}$, then there exists an isomorphism $\varphi:A\rightarrow B$ in $\mathcal{C}$ with $\Lambda(\varphi)=\Phi$. The sort of functors with these properties are those used in the classification of C$^*$-algebras. For example, the functor consisting of pointed $K_0$ and $K_1$ is invariant under approximate unitary equivalence, it is also full on isomorphisms when restricted to the category of unital Kirchberg algebras satisfying the UCT (see \cite{PHI00}). Similarly, the functors $KT_u$ and $\underline{K}T_u$ of \cite{Class1} are invariant under approximate unitary equivalence and is full on isomorphisms when restricted to classifiable C$^*$-algebras.
\par If $\Lambda$ is invariant under unitary equivalence, an anomalous action $(A,\alpha,u)$ induces a $G$-action on $\Lambda(A)$ through the automorphisms $\Lambda(\alpha_g)$. If $(A,\alpha,u)$ and $(B,\beta,v)$ are anomalous actions, we say the induced actions $\Lambda(\alpha_g)$ and $\Lambda(\beta_g)$ are \emph{conjugate} if there exists an isomorphism $\Phi:\Lambda(A)\rightarrow \Lambda(B)$ with $\Phi \Lambda(\alpha_g)\Phi^{-1}=\Lambda(\beta_g)$ for all $g\in G$. We denote this by $\Lambda(\alpha)\sim \Lambda(\beta)$.
\par Let $(A,\alpha,u)$ and $(A,\beta,v)$ be two anomalous $G$-C$^*$-algebras. We write $(\alpha,u)\simeq_{\Lambda}(\beta,v)$ if $(\alpha,u)\simeq (\beta,v)$ through an automorphism $\theta$ with $\Lambda(\theta)=\id_{\Lambda(A)}$. This notion recovers $K$-trivial cocycle conjugacy of Definition \ref{def:equivalences} when $\Lambda$ is taken to be the functor consisting of $K_0\oplus K_1$. Finally, if $\Ro$ is a class of anomalous $G$-C$^*$-algebras, we will say $\Ro$ is \emph{closed under conjugacy}, if whenever $(A,\alpha,u)\in \Ro$ and $\varphi:A\rightarrow B$ is an isomorphism in \textbf{C*alg} then $(B,\varphi\alpha\varphi^{-1},\varphi(u))\in \Ro$.
\begin{lemma}\label{lem:abstract}
    Let $G$ be a group, $\D$ a strongly self-absorbing C$^*$-algebra and $\Ro$ a class of anomalous $G$-C$^*$-algebras that is closed under conjugation.
    Let $\Lambda$ be a functor with domain category the category of C$^*$-algebras such that $\Lambda$ is invariant under approximate unitary equivalence and is full on isomorphisms for C$^*$-algebras in $\Ro$. Suppose further that,
    \begin{enumerate}[label=(A\arabic*)]
        \item there exists a $G$-action $(\D,s_G,1)$ such that if $(A,\alpha,u)\in \Ro$, then $(A,\alpha,u)\simeq (A\otimes\D,\alpha\otimes s_G,u\otimes 1)$ through an automorphism that is approximately unitarily equivalent to $\id_A\otimes 1_{\D}$;\label{item:1}
        \item if there exists a $(G,\omega)$ action in $\Ro$ for some $\omega\in Z^3(G,\T)$, then there exist a $(G,\omega)$ and $(G,\overline{\omega})$ action $(\D,s_G^{\omega},u^\omega)$ and $(\D,s_G^{\overline{\omega}},u^{\overline{\omega}})$ respectively such that $(\D,s_G^{\overline{\omega}},u^{\overline{\omega}})\otimes (\D,s_G^{\omega},u^\omega)\simeq (\D,s_G,1)$ and for any $(G,\omega)$-action $(A,\alpha,u)\in \Ro$, $(A,\alpha,u)\otimes (\D,s_G^{\overline{\omega}},u^{\overline{\omega}})\in \Ro$;\label{item:2}
        \item for cocycle actions $(A,\alpha,u),(B,\beta,v)\in \Ro$, $\Lambda(\alpha)\sim \Lambda(\beta)$ if and only if $\alpha\simeq\beta$.\label{item:3}
    \end{enumerate}
Then, if $(A,\alpha,u)$ and $(B,\beta,v)$ in $\Ro$, $(A,\alpha,u)\simeq(B,\beta,v)$ if and only if $\Lambda(\alpha)\sim \Lambda(\beta)$ and $\omicron(\alpha,u)=\omicron(\beta,v)$.
\par With the same hypothesis but replacing \ref{item:3} with the condition that
\begin{enumerate}[(A3')]
    \item for cocycle actions $(A,\alpha,u)$ and $(A,\beta,v)$ in $\Ro$, $(A,\alpha,u)\simeq_{\Lambda}(A,\beta,v)$ if and only if $\Lambda(\alpha_g)=\Lambda(\beta_g)$ for all $g\in G$,\label{item:3'}
\end{enumerate}
then if $(A,\alpha,u)$ and $(A,\beta,v)$ in $\Ro$, $(A,\alpha,u)\simeq_{\Lambda}(A,\beta,v)$ if and only if $\omicron(\alpha,u)=\omicron(\beta,v)$ and $\Lambda(\alpha_g)=\Lambda(\beta_g)$ for every $g\in G$.
\end{lemma}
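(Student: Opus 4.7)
The plan is to prove the statement via a Connes-style trick: tensoring with the inverse-anomaly model action cancels the anomaly, reducing the problem to the classification of cocycle actions supplied by (A3'). The forward implication is a routine check. The anomaly $\omicron(\alpha,u)$ is preserved by unitary perturbation and conjugation by construction, and applying $\Lambda$ to the intertwining identity $\theta\beta_g\theta^{-1}=\Ad(s_g)\alpha_g$ (with $\Lambda(\theta)=\id$) gives $\Lambda(\beta_g)=\Lambda(\alpha_g)$, using that inner automorphisms are approximately unitarily equivalent to the identity and $\Lambda$ is invariant under such equivalences.

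For the converse, write $\omega=\omicron(\alpha,u)=\omicron(\beta,v)$ and tensor both actions with the model $(\D,s_G^{\overline{\omega}},u^{\overline{\omega}})$ provided by (A2). This yields cocycle actions $(A\otimes\D,\alpha\otimes s_G^{\overline{\omega}},u\otimes u^{\overline{\omega}})$ and $(A\otimes\D,\beta\otimes s_G^{\overline{\omega}},v\otimes u^{\overline{\omega}})$, both in $\Ro$ by (A2). Assuming these can be shown $\simeq_\Lambda$-conjugate through some $\tilde\theta$ with $\Lambda(\tilde\theta)=\id$, tensoring once more with $(\D,s_G^{\omega},u^{\omega})$ via $\tilde\theta\otimes\id_\D$, then using (A2) to replace $(\D,s_G^{\overline{\omega}},u^{\overline{\omega}})\otimes(\D,s_G^{\omega},u^{\omega})$ by $(\D,s_G,1)$ up to cocycle conjugacy, and finally (A1) to absorb the remaining $(\D,s_G,1)$ factor, composes to an isomorphism $A\to A$ witnessing $(A,\alpha,u)\simeq_\Lambda(A,\beta,v)$. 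The $\Lambda$-triviality of the conjugating isomorphism survives each step: the isomorphisms from (A1) and (A2) are approximately unitarily equivalent to maps of the form $\id\otimes 1_\D$ (by strong self-absorption of $\D$), so their $\Lambda$-classes are canonical identifications that cancel in the composition, and the $\tilde\theta\otimes\id_\D$ piece will be shown below to have $\Lambda$-class the identity.

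The principal technical step, and what I expect to be the main obstacle, is verifying the (A3') hypothesis $\Lambda(\alpha_g\otimes s_G^{\overline{\omega}}(g))=\Lambda(\beta_g\otimes s_G^{\overline{\omega}}(g))$ for all $g\in G$. Factoring $\alpha_g\otimes s_G^{\overline{\omega}}(g)=(\alpha_g\otimes\id_\D)\circ(\id_A\otimes s_G^{\overline{\omega}}(g))$ and likewise for $\beta$, and observing that the $\Lambda(\id_A\otimes s_G^{\overline{\omega}}(g))$ factor is common to both sides, it suffices to show $\Lambda(\alpha_g\otimes\id_\D)=\Lambda(\beta_g\otimes\id_\D)$. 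By (A1) the map $\id_A\otimes 1_\D:A\to A\otimes\D$ is approximately unitarily equivalent to a $\ast$-isomorphism, so $\iota^A:=\Lambda(\id_A\otimes 1_\D):\Lambda(A)\to\Lambda(A\otimes\D)$ is an isomorphism. Functoriality of $\Lambda$ applied to the identity $(\alpha_g\otimes\id_\D)\circ(\id_A\otimes 1_\D)=(\id_A\otimes 1_\D)\circ\alpha_g$ then yields $\Lambda(\alpha_g\otimes\id_\D)=\iota^A\Lambda(\alpha_g)(\iota^A)^{-1}$, which depends only on $\Lambda(\alpha_g)$. The hypothesis $\Lambda(\alpha_g)=\Lambda(\beta_g)$ finishes the argument; the same naturality diagram (with $\tilde\theta$ in place of $\alpha_g$) shows that $\Lambda(\tilde\theta\otimes\id_\D)=\id$ whenever $\Lambda(\tilde\theta)=\id$, as needed above. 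This naturality argument depends essentially on $\iota^A$ being invertible, which is precisely what the $\D$-stability in (A1) provides, and explains why the strategy cannot be ported to settings without such a model absorption result.
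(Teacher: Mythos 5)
Your treatment of the second conclusion (the one with (A3')) is essentially the paper's own argument: you run the same chain of cocycle conjugacies through $(A\otimes\D\otimes\D,\ \cdot\otimes s_G^{\overline{\omega}}\otimes s_G^{\omega})$ using (A1) and (A2), feed the classification of cocycle actions in the middle, and control the $\Lambda$-class of the composite by the naturality identity $\Lambda(\gamma\otimes\id_{\D})\circ\Lambda(\id\otimes 1_{\D})=\Lambda(\id\otimes 1_{\D})\circ\Lambda(\gamma)$ together with invertibility of $\Lambda(\id\otimes 1_{\D})$ (from (A1)) and the Toms--Winter-type fact that all the absorbing isomorphisms are approximately unitarily equivalent to first-factor embeddings, so their $\Lambda$-classes cancel. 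Your verification of the hypothesis of (A3') by factoring $\alpha_g\otimes s_G^{\overline{\omega}}(g)=(\alpha_g\otimes\id_{\D})\circ(\id_A\otimes s_G^{\overline{\omega}}(g))$ is a harmless variant of the paper's observation that $s_G^{\overline{\omega}}$ is approximately inner; both rest on the same naturality computation, and your direct appeal to (A3') for the tensored cocycle actions is if anything slightly more streamlined than the paper's detour through ``(A3') implies (A3)''.

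The genuine gap is that the lemma has two conclusions and you prove only the second. The first statement --- under (A3), for $(A,\alpha,u)$ and $(B,\beta,v)$ in $\Ro$ on possibly different algebras, $(A,\alpha,u)\simeq(B,\beta,v)$ if and only if $\Lambda(\alpha)\sim\Lambda(\beta)$ and the anomalies agree --- is never addressed, and your key step does not transfer verbatim. There the hypothesis is a conjugacy $\Phi\Lambda(\alpha_g)\Phi^{-1}=\Lambda(\beta_g)$ rather than an equality, and one must show that the induced isomorphism $\Lambda(\id_B\otimes 1_{\D})\,\Phi\,\Lambda(\id_A\otimes 1_{\D})^{-1}$ conjugates $\Lambda(\alpha_g\otimes s_G^{\overline{\omega}}(g))$ to $\Lambda(\beta_g\otimes s_G^{\overline{\omega}}(g))$; the factors $\Lambda(\id_A\otimes s_G^{\overline{\omega}}(g))$ and $\Lambda(\id_B\otimes s_G^{\overline{\omega}}(g))$ are no longer a ``common factor'' to be cancelled, and there is no a priori reason the conjugating map intertwines them. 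The paper closes this by noting that every automorphism of the strongly self-absorbing $\D$ is approximately inner, so $\Lambda(\gamma\otimes s_G^{\overline{\omega}}(g))=\Lambda(\gamma\otimes\id_{\D})$ and the same naturality computation applies on the nose; with that one extra ingredient your chain proves the (A3) statement as well. As written, half of the lemma (the half actually invoked in Theorem 5.4 and Corollary 5.5) is missing.
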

\begin{proof}
First we show that if \ref{item:1}-\ref{item:3} hold and $(A,\alpha,u)$, $(B,\beta,v)$ are anomalous actions in $\Ro$, then $(A,\alpha,u)\simeq (B,\beta,v)$ if and only if $\Lambda(\alpha)\sim \Lambda(\beta)$ and $\omicron(\alpha,u)=\omicron(\beta,v)$. If $(A,\alpha,u)\simeq (B,\beta,v)$, it is clear that $\omicron(\alpha,u)=\omicron(\beta,v)$ and also that $\Lambda(\alpha)\sim \Lambda(\beta)$ as $\Lambda$ is trivial when evaluated at inner automorphisms. We now turn to the converse. Suppose $\Lambda(\alpha)\sim \Lambda(\beta)$ and $\omicron(\alpha,u)=\omicron(\beta,v)$. First note that this implies that also $\Lambda(\alpha\otimes \id_{\D})\sim \Lambda(\beta\otimes \id_{\D})$. Indeed, by \ref{item:1} let $\phi_A:A\rightarrow A\otimes \D$ and $\phi_B:B\rightarrow B\otimes\D$ be isomorphisms which are approximately unitarily equivalent to the first factor embeddings and $\Phi:\Lambda(A)\rightarrow \Lambda(B)$ be an isomorphism such that $\Phi \Lambda(\alpha_g)\Phi^{-1}=\Lambda(\beta_g)$ for $g\in G$. Note $\Lambda(\alpha_g\otimes\id_{\D})\Lambda(\phi_A)=\Lambda(\alpha_g\otimes\id_{\D})\Lambda(\id_A\otimes 1_{\D})=\Lambda(\alpha_g\otimes 1_{\D})=\Lambda(\phi_A)\Lambda(\alpha_g)$ (and similarly replacing $A$ by $B$). Hence we compute that
\begin{align*}
    \Lambda(\alpha_g\otimes \id_{\D})\Lambda(\phi_A)\Phi \Lambda(\phi_B)^{-1}&=\Lambda(\phi_A)\Lambda(\alpha_g)\Phi \Lambda(\phi_B)^{-1}\\
    &=\Lambda(\phi_A)\Phi \Lambda(\beta_g) \Lambda(\phi_B)^{-1}\\
    &= \Lambda(\phi_A)\Phi \Lambda(\phi_B)^{-1} \Lambda(\beta_g\otimes \id_{\D})
\end{align*}
it follows that $\Lambda(\phi_B)\Phi \Lambda(\phi_A)^{-1}$ conjugates $\Lambda(\alpha_g\otimes \id_{\D})$ to $\Lambda(\beta_g\otimes \id_{\D})$ for all $g\in G$. Now, by hypothesis we have that
\begin{align}
(A,\alpha,u) \overset{\ref{item:1}}&{\simeq} (A\otimes \D,\alpha\otimes s_G,u\otimes 1_{\D})\nonumber\\
\overset{\ref{item:2}}&{\simeq} (A\otimes(\D\otimes\D),\alpha\otimes(s_G^{\overline{\omega}}\otimes s_G^{\omega}),u\otimes (u^{\overline{\omega}}\otimes u^{\omega}))\nonumber\\
&=((A\otimes\D)\otimes\D,(\alpha\otimes s_G^{\overline{\omega}})\otimes s_G^{\omega},(u\otimes u^{\overline{\omega}})\otimes u^{\omega})\nonumber\\
\overset{\ref{item:3},\ref{item:2}}&{\simeq}((B\otimes\D)\otimes\D,(\beta\otimes  s_G^{\overline{\omega}})\otimes s_G^{\omega},(v\otimes u^{\overline{\omega}})\otimes u^{\omega})\label{eqn:chain}\\
&=(B\otimes(\D\otimes\D),\beta\otimes(s_G^{\overline{\omega}}\otimes s_G^{\omega}),v\otimes (u^{\overline{\omega}}\otimes u^{\omega})\nonumber)\\
&\overset{\ref{item:2}}{\simeq} (B\otimes \D,\beta\otimes s_G,v\otimes 1_{\D})\nonumber\\
&\overset{\ref{item:1}}{\simeq} (B,\beta,v).\nonumber
\end{align}
Where in the third isomorphism we have used \ref{item:3} for the cocycle actions $(A\otimes\D,\alpha_g\otimes s_G^{\overline{\omega}},u\otimes u^{\overline{\omega}})$ and $(B\otimes\D,\beta_g\otimes s_G^{\overline{\omega}},v\otimes u^{\overline{\omega}})$. The reason we may apply \ref{item:3} in this setting is that $s_G^{\overline{\omega}}$ is approximately inner and hence our previous computation shows that $\Lambda(\alpha_g\otimes 
 s_G^{\overline{\omega}})=\Lambda(\alpha_g\otimes \id_{\D})\sim \Lambda(\beta_g\otimes \id_{\D})=\Lambda(\beta_g\otimes s_G^{\overline{\omega}})$ as required for the application of \ref{item:3}.
\par Now suppose that we replace condition \ref{item:3} with \ref{item:3'}. We will show that under the hypothesis of the lemma, \ref{item:3'} implies \ref{item:3}. Therefore, the cocycle conjugacies in (\ref{eqn:chain}) still hold. Then we compute the isomorphisms that induce the cocycle conjugacies in (\ref{eqn:chain}) and show that their composition is the identity after applying $\Lambda$. Let $(A,\alpha,u)$ and $(B,\beta,v)$ be cocycle actions in $\Ro$. Suppose $\Lambda(\alpha)\sim \Lambda(\beta)$. There exists an isomorphism $\Phi\in \Hom(\Lambda(A),\Lambda(B))$ such that $\Phi \Lambda(\beta_g)\Phi^{-1}=\Lambda(\alpha_g)$ for all $g\in G$. As $\Lambda$ is full on isomorphisms, there exists a $^*$-isomorphism $\varphi:B\rightarrow A$ with $\Lambda(\varphi)=\Phi$. Therefore $\Lambda(\varphi\beta_g\varphi^{-1})=\Lambda(\alpha_g)$ for all $g\in G$. By \ref{item:3'} one has that $(A,\alpha,u)\simeq_\Lambda(A,\varphi\beta\varphi^{-1},\varphi(v))\simeq (B,\beta,v)$.
\par Set $A=B$ in (\ref{eqn:chain}). Reading from top to bottom in (\ref{eqn:chain}), denote by $\varphi_1$, $\varphi_2$, $\varphi_3$, $\varphi_4$ and $\varphi_5$ the isomorphisms inducing each of the conjugacies. Note that $\varphi_5=\varphi_1^{-1}$ and $\varphi_4=\varphi_2^{-1}$. By \ref{item:1}, $\varphi_1\au \id_A\otimes 1_{\mathcal{D}}$. Moreover, $\varphi_2\au \id_A\otimes\id_{\D}\otimes 1_{\D}$ by (\cite[Corollary 1.12]{TOWI07}). Denote by $\varphi$ the isomorphism inducing the cocycle conjugacy from $(A\otimes \D,\alpha\otimes 
s_G^{\overline{\omega}},u\otimes u^{\overline{\omega}})$ to  $(A\otimes \D,\beta\otimes s_G^{\overline{\omega}},u\otimes u^{\overline{\omega}})$ which satisfies $\Lambda(\varphi)=\Lambda(\id_A\otimes\id_{\D})$. We may use the functoriality of $\Lambda$ and its invariance under approximate unitary equivalence to see that
\begin{align*}
\Lambda(\varphi_5\varphi_4\varphi_3\varphi_2\varphi_1)=&\Lambda(\id_A\otimes 1_{\D}\otimes 1_{\D})^{-1}\Lambda(\varphi\otimes\id_{\D})\Lambda(\id_A\otimes 1_{\D}\otimes 1_{\D})\\
&=\Lambda(\id_A\otimes 1_{\D}\otimes 1_{\D})^{-1}\Lambda(\id_A\otimes \id_{\D}\otimes \id_{\D})\Lambda(\id_A\otimes 1_{\D}\otimes 1_{\D})\\
&=\id_{\Lambda(A)}.\qedhere
\end{align*}
\end{proof}
We now prove our classification theorems.
\begin{theorem}\label{thm:Rokhclasskirchberg}
    Let $G$ be a finite group and $A$ be a unital Kirchberg algebra satisfying the UCT with $A\cong A\otimes \M_{|G|^\infty}$. If $(\alpha,u)$, $(\beta,v)$ are anomalous actions of $G$ on $A$ with the Rokhlin property then $(\alpha,u)\simeq_{K}(\beta,v)$ if and only if $\omicron(\alpha,u)=\omicron(\beta,v)$ and $K_i(\alpha_g)=K_i(\beta_g)$ for all $g\in G$ and $i=0,1$.
\end{theorem}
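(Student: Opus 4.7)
The plan is to obtain this as a direct application of the abstract framework in Lemma \ref{lem:abstract}. I would take $\mathcal{D}=M_{|G|^\infty}$, take $\Lambda$ to be the invariant consisting of pointed $K_0$ together with $K_1$, and let $\Ro$ be the class of anomalous $G$-C$^*$-algebras $(A,\alpha,u)$ where $A$ is a unital Kirchberg algebra in the UCT class satisfying $A\cong A\otimes M_{|G|^\infty}$ and $(\alpha,u)$ has the Rokhlin property. This $\Lambda$ is invariant under approximate unitary equivalence and, by the Phillips--Kirchberg classification of UCT Kirchberg algebras (\cite{PHI00,Ki95}), is full on isomorphisms on $\Ro$. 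That $\Ro$ is closed under conjugation is immediate.

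Next I verify the three hypotheses. Condition \ref{item:1} is exactly Proposition \ref{prop:absorption} applied to $s_G$ on $M_{|G|^\infty}$. For condition \ref{item:2}, given $\omega\in Z^3(G,\T)$, I take as model actions $(s_G^\omega,u_G^\omega)$ and $(s_G^{\overline\omega},u_G^{\overline\omega})$ the ones constructed in Section \ref{sec:UHFactions}; these have the Rokhlin property by Proposition \ref{prop:rokhlin}. For any $(A,\alpha,u)\in\Ro$, the tensor product $(A\otimes M_{|G|^\infty},\alpha\otimes s_G^{\overline\omega},u\otimes u_G^{\overline\omega})$ is still anomalous of the right type, the underlying C$^*$-algebra is (up to isomorphism) $A$ itself, and the Rokhlin projections of $s_G^{\overline\omega}$ (tensored with the unit) furnish Rokhlin projections for the product; hence it lies in $\Ro$. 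The product $(s_G^{\overline\omega},u_G^{\overline\omega})\otimes (s_G^\omega,u_G^\omega)$ is a genuine $G$-action with the Rokhlin property on $M_{|G|^\infty}\otimes M_{|G|^\infty}\cong M_{|G|^\infty}$; since both this action and $s_G$ act trivially on $K_0(M_{|G|^\infty})=\mathbb{Z}[1/|G|]$, Izumi's classification \cite[Theorem 4.2]{IZU04II} identifies them up to cocycle conjugacy. Condition \ref{item:3'} is precisely the $K$-trivial cocycle conjugacy version of Izumi's classification of finite group Rokhlin actions on UCT Kirchberg algebras, which classifies such actions by the induced $K$-theory automorphisms. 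The theorem then follows by reading off the conclusion of Lemma \ref{lem:abstract} in the \ref{item:3'}-variant.

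The step I expect to require the most care is the verification of \ref{item:2}. Beyond checking that $\Ro$ is closed under tensoring with $(M_{|G|^\infty},s_G^{\overline\omega},u_G^{\overline\omega})$ (which is essentially formal, given that Kirchberg UCT algebras are stable under tensoring with a UHF algebra and that the Rokhlin property passes through tensor products), one has to identify the cocycle action $(s_G^{\overline\omega}\otimes s_G^\omega,u_G^{\overline\omega}\otimes u_G^\omega)$ with $(s_G,1)$ up to cocycle conjugacy on $M_{|G|^\infty}$. This is the point where the UHF-stability hypothesis is genuinely used: it allows us to appeal to Izumi's classification on $M_{|G|^\infty}$, which provides exactly the anomaly-cancelling cocycle conjugacy needed to feed into the chain of equivalences inside the proof of Lemma \ref{lem:abstract}. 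Once \ref{item:2} is in place, the remaining invocation of Izumi's classification (for \ref{item:3'}) and the bookkeeping of $K$-triviality along the chain of cocycle conjugacies are handled by the abstract lemma itself.
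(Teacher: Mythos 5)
Your proposal follows the same route as the paper: both proofs apply Lemma \ref{lem:abstract} with $\D=M_{|G|^\infty}$, $\Lambda$ the pointed $K_0\oplus K_1$ functor and $\Ro$ the class of Rokhlin anomalous $G$-actions on UCT Kirchberg algebras, verifying \ref{item:1} by Proposition \ref{prop:absorption}, \ref{item:2} via the model actions of Section \ref{sec:UHFactions} together with Proposition \ref{prop:rokhlin}, and \ref{item:3'} via Izumi's classification \cite[Theorem 4.2]{IZU04II}.

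Two points in your verification of \ref{item:2} need repair, though both are easily fixed and do not change the structure of the argument. First, $(s_G^{\overline{\omega}},u_G^{\overline{\omega}})\otimes(s_G^{\omega},u_G^{\omega})$ is not a genuine $G$-action: its anomaly is trivial, but its $2$-cocycle $u_G^{\overline{\omega}}\otimes u_G^{\omega}$ need not be, so one must first invoke the cocycle-vanishing result for Rokhlin cocycle actions (\cite[Lemma 3.12]{IZU04}, resp.\ \cite[Lemma 3.12]{IZU04II}) to perturb it to a group action; the same step is implicitly needed in \ref{item:3'}, whose hypothesis concerns cocycle actions while Izumi's theorem classifies genuine actions, and the paper makes both uses explicit. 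Second, \cite[Theorem 4.2]{IZU04II} is a classification theorem on Kirchberg algebras and does not apply to the stably finite algebra $M_{|G|^\infty}$; to identify the resulting group action with $s_G$ one must use a classification valid on UHF algebras --- the paper uses \cite[Theorem III.6]{HEJO83} combined with \cite[Lemma 3.12]{IZU04} (the Rokhlin property coming from Proposition \ref{prop:rokhlin}), and Izumi's TAF classification \cite[Theorem 4.3]{IZU04II} would serve equally well. With these substitutions your argument coincides with the paper's proof.
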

\begin{proof}
    We check that the hypothesis of Lemma \ref{lem:abstract} is satisfied. Let $\D=\M_{|G|^\infty}$, $\Lambda$ be the functor consisting of the pointed $K_0$ group direct sum the $K_1$ group and $\Ro$ the class of Rokhlin anomalous $G$-actions on unital Kirchberg algebras satisfying the UCT. That $\Lambda$ is full on isomorphisms follows from \cite{PHI00}. Condition \ref{item:1} follows from Proposition \ref{prop:absorption}. For any $\omega\in Z^3(G,\T)$, we have actions $(\D,s_G^{\omega},u^{\omega})$ as discussed in Section \ref{sec:UHFactions}. That $(\D,s_G^{\overline{\omega}},u^{\overline{\omega}})\otimes (\D,s_G^{\omega},u^\omega)\simeq (\D,s_G,1)$ follows from \cite[Theorem III.6]{HEJO83} combined with \cite[Lemma 3.12]{IZU04} as the actions $(\D,s_G^{\omega},u^{\omega})$ have the Rokhlin property (and hence property $\mathcal{R}_\infty$) by Proposition \ref{prop:rokhlin}. Therefore, \ref{item:2} is also satisfied. Finally \ref{item:3'} is satisfied by Izumi's classification result \cite[Theorem 4.2]{IZU04II} and that every cocycle action with the Rokhlin property is a unitary perturbation of a group action \cite[Lemma 3.12]{IZU04II}. 
\end{proof}
\begin{theorem}\label{thm:RokhlinclassTAF}
    Let $G$ be a finite group and $A$ be a unital, simple, nuclear TAF-algebra in the UCT class such that $A\cong A\otimes \M_{|G|^\infty}$ and $(\alpha,u),(\beta,v)$ are anomalous actions on $A$ with the Rokhlin property, then $(\alpha,u)\simeq_{K}(\beta,v)$ if and only if $\omicron(\alpha,u)=\omicron(\beta,v)$ and $K_i(\alpha_g)=K_i(\beta_g)$ for all $g\in G$ and $i=0,1$.
\end{theorem}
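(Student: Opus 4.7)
The plan is to follow the template of the proof of Theorem \ref{thm:Rokhclasskirchberg} and apply the abstract classification Lemma \ref{lem:abstract} with \ref{item:3'}, verifying each of its hypotheses in the TAF setting. Set $\D = \M_{|G|^\infty}$, let $\Lambda$ be the functor consisting of pointed $K_0$ direct sum $K_1$ (with the order-unit structure inherited in the stably finite setting), and let $\Ro$ be the class of anomalous $G$-actions with the Rokhlin property on unital, simple, separable, nuclear TAF algebras $A$ in the UCT class satisfying $A\cong A\otimes\M_{|G|^\infty}$. That $\Ro$ is closed under conjugacy is immediate, and that $\Lambda$ is full on isomorphisms when restricted to $\Ro$ is the content of Lin's classification \cite{LIN04}, since the objects of $\Ro$ are classified by their $K$-theoretic invariants (the TAF hypothesis yields real rank zero, so the order-unit $K_0$ together with $K_1$ suffices).

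Condition \ref{item:1} is exactly Proposition \ref{prop:absorption}: under the Rokhlin property and $\M_{|G|^\infty}$-stability, any $(G,\omega)$-action $(\alpha,u)$ is cocycle conjugate to $(\alpha\otimes s_G,u\otimes 1)$ through an isomorphism approximately unitarily equivalent to $\id_A\otimes 1_{\D}$. For \ref{item:2}, the model $(G,\omega)$-actions $(\D,s_G^\omega,u^\omega)$ on $\M_{|G|^\infty}$ from Section \ref{sec:UHFactions} have the Rokhlin property by Proposition \ref{prop:rokhlin}, hence property $\mathcal{R}_\infty$, so the cocycle conjugacy $(\D,s_G^{\overline\omega},u^{\overline\omega})\otimes(\D,s_G^\omega,u^\omega)\simeq (\D,s_G,1)$ follows from \cite[Theorem III.6]{HEJO83} together with \cite[Lemma 3.12]{IZU04}, exactly as in the Kirchberg case. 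Moreover, the class $\Ro$ is closed under tensoring with $(\D,s_G^{\overline\omega},u^{\overline\omega})$, since tensoring a unital, simple, nuclear TAF algebra in the UCT class with the UHF algebra $\M_{|G|^\infty}$ preserves each of these properties, and the Rokhlin property is inherited.

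The key step is the verification of \ref{item:3'}. Here one invokes Izumi's classification of Rokhlin $G$-actions on unital, simple, nuclear TAF algebras in the UCT class, namely \cite[Theorem 4.3]{IZU04II}, together with his lemma \cite[Lemma 3.12]{IZU04II} showing that any cocycle action with the Rokhlin property is a unitary perturbation of a genuine group action. The latter reduces the classification of Rokhlin cocycle actions in $\Ro$ to that of Rokhlin genuine actions, and the former asserts that two such actions are conjugate through an automorphism trivial on $K$-theory if and only if the induced $K$-theoretic data agrees, which is precisely condition \ref{item:3'}.

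With \ref{item:1}, \ref{item:2}, and \ref{item:3'} in place, Lemma \ref{lem:abstract} delivers the conclusion: $(\alpha,u)\simeq_K(\beta,v)$ if and only if $\omicron(\alpha,u)=\omicron(\beta,v)$ and $K_i(\alpha_g)=K_i(\beta_g)$ for all $g\in G$ and $i=0,1$. The main obstacle — and the only place where the TAF case differs substantively from the Kirchberg case — is ensuring that the invariant $\Lambda$ chosen is genuinely classifying for the target class and that Izumi's cocycle-perturbation lemma applies verbatim; both are available in the TAF setting once one restricts to the UCT, unital, simple, nuclear context assumed in the hypothesis.
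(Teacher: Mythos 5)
Your proposal is correct and follows essentially the same route as the paper's proof: both apply Lemma \ref{lem:abstract} with $\D=\M_{|G|^\infty}$, $\Ro$ the class of Rokhlin anomalous actions on $\M_{|G|^\infty}$-stable unital, simple, separable, nuclear TAF algebras in the UCT class, and $\Lambda$ the (ordered) $K$-theory functor, with fullness on isomorphisms from \cite{LIN04}, \ref{item:1} from Proposition \ref{prop:absorption}, \ref{item:2} as in the Kirchberg case, and \ref{item:3'} from Izumi's classification \cite[Theorem 4.3]{IZU04II} combined with the cocycle-perturbation lemma. The extra verifications you spell out (closure of $\Ro$ under conjugacy and under tensoring with the model action) are implicit in the paper's argument and are correct.
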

\begin{proof}
    We apply Lemma \ref{lem:abstract} with $\D=\M_{|G|^\infty}$, $\Ro$ the class of Rokhlin anomalous actions on $\M_{|G|^\infty}$-stable unital, simple, separable, nuclear TAF-algebras satisfying the UCT and $\Lambda$ the functor consisting of ordered $K_0$ and $K_1$. Firstly, $\Lambda$ is full on isomorphisms by \cite{LIN04}. \ref{item:1} holds by Proposition \ref{prop:absorption}. \ref{item:2} holds for the same reason as in the proof of Theorem \ref{thm:Rokhclasskirchberg}. Condition \ref{item:3'} follows from a combination of \cite[Theorem 4.3]{IZU04II} and \cite[Lemma 3.12]{IZU04}.
\end{proof}
Similarly, one may classify anomalous actions with the Rokhlin property on the Razak--Jacelon algebra $\mathcal{W}$.
\begin{theorem}\label{thm:classW}
Let $G$ be a finite group and $(\alpha,u)$, $(\beta,v)$ be anomalous $G$ actions with the Rokhlin property on $\W$. Then $(\alpha,u)\simeq (\beta,v)$ if and only if $o(\alpha,u)=o(\beta,v)$.
\end{theorem}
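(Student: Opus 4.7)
The plan is to apply Lemma \ref{lem:abstract} with $\D = M_{|G|^\infty}$, take $\Ro$ to be the class of anomalous $G$-actions on $\W$ with the Rokhlin property, and take $\Lambda$ to be the constant functor sending every C$^*$-algebra to a one-point set. With this choice of $\Lambda$, the relation $\simeq_{\Lambda}$ coincides with $\simeq$, the condition $\Lambda(\alpha) \sim \Lambda(\beta)$ is trivially satisfied, and $\Lambda$ is full on isomorphisms when restricted to $\Ro$ (since any two objects of $\Ro$ are realised on the same underlying algebra $\W$). Thus the conclusion of Lemma \ref{lem:abstract} specialises to exactly the statement of Theorem \ref{thm:classW}.

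First I would verify that $\W$ tensorially absorbs $M_{|G|^\infty}$. The algebra $\W \otimes M_{|G|^\infty}$ is simple, separable, nuclear, stably projectionless, has trivial $K$-theory (by the K\"unneth theorem, using $K_*(\W)=0$), admits no bounded traces (inherited from $\W$), and satisfies the UCT. Hence the classification of stably projectionless C$^*$-algebras identifies it with $\W$. Hypothesis \ref{item:1} then follows immediately from Proposition \ref{prop:absorption} applied to $\W$.

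Next, hypothesis \ref{item:2} is verified by the same argument used in the proofs of Theorems \ref{thm:Rokhclasskirchberg} and \ref{thm:RokhlinclassTAF}: the model actions $(\D, s_G^\omega, u^\omega)$ of Section \ref{sec:UHFactions} have the Rokhlin property by Proposition \ref{prop:rokhlin}, so \cite[Theorem III.6]{HEJO83} combined with \cite[Lemma 3.12]{IZU04} yields $(\D, s_G^{\overline{\omega}}, u^{\overline{\omega}}) \otimes (\D, s_G^\omega, u^\omega) \simeq (\D, s_G, 1)$. Closure of $\Ro$ under tensoring with $(\D, s_G^{\overline{\omega}}, u^{\overline{\omega}})$ is immediate since the Rokhlin property is preserved under tensor products with Rokhlin actions and $\W \otimes \D \cong \W$.

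The remaining step is hypothesis \ref{item:3}: any two Rokhlin cocycle $G$-actions on $\W$ are cocycle conjugate. This I would obtain from Nawata's classification of Rokhlin $G$-actions on $\W$ (\cite{NA16})---since $K_*(\W)$ vanishes, the induced module-theoretic invariant of the action is trivial, so the classification collapses to uniqueness---combined with \cite[Lemma 3.12]{IZU04}, which reduces Rokhlin cocycle actions to unitary perturbations of genuine group actions. I do not expect a serious obstacle: the proof is the same template as for Theorems \ref{thm:Rokhclasskirchberg} and \ref{thm:RokhlinclassTAF}, with the $K$-theoretic invariant trivialised. The only point requiring slight care is verifying $M_{|G|^\infty}$-stability of $\W$, which is a short application of classification and Künneth.
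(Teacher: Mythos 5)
Your proposal follows essentially the same route as the paper's proof: Lemma \ref{lem:abstract} with $\D=M_{|G|^\infty}$, $\Ro$ the class of Rokhlin anomalous $G$-actions on $\W$, and the trivial functor $\Lambda$, with \ref{item:1} supplied by Proposition \ref{prop:absorption}, \ref{item:2} verified exactly as in Theorem \ref{thm:Rokhclasskirchberg}, and \ref{item:3} obtained from Nawata's classification together with an untwisting of cocycle actions to genuine actions. Two details need repair. First, your verification that $\W\otimes M_{|G|^\infty}\cong\W$ asserts that the tensor product admits no bounded traces ``inherited from $\W$''; this is false, since $\W$ is monotracial (it carries a unique tracial state), and a traceless classifiable algebra with vanishing $K$-theory would be $\mathcal{O}_2\otimes\K$ rather than $\W$. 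The correct statement is that $\W\otimes M_{|G|^\infty}$ is simple, separable, nuclear, stably projectionless, classifiable, has trivial $K$-theory and a unique tracial state, hence is isomorphic to $\W$; this absorption is standard and is used implicitly in the paper. Second, for the reduction of Rokhlin cocycle actions on $\W$ to group actions you invoke \cite[Lemma 3.12]{IZU04}, which is formulated for unital C$^*$-algebras and does not directly apply to the non-unital, stably projectionless $\W$; the paper instead observes that $\W\cong\W\otimes M_{|G|}$, so \cite[Remark 1.5]{GASZ22} untwists any cocycle action of the finite group $G$ on $\W$ without any Rokhlin hypothesis, and then \ref{item:3} follows from \cite[Corollary 3.7]{NA16}. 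With these two substitutions your argument coincides with the paper's proof.
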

\begin{proof}
 We check the conditions of Lemma \ref{lem:abstract} with $\D=\M_{|G|^\infty}$, $\Ro$ the class of Rokhlin anomalous actions on $\W$ and $\Lambda$ the trivial functor. Firstly, \ref{item:1} holds by Proposition \ref{prop:absorption}. Moreover, \ref{item:2} holds as in the proof of Theorem \ref{thm:Rokhclasskirchberg}. Finally, \ref{item:3} follows from \cite[Corollary 3.7]{NA16} as every cocycle action of a finite group on $\W$ is cocycle conjugate to a group action (this follows as $\W\cong \W\otimes M_{|G|}$ and hence \cite[Remark 1.5]{GASZ22} applies).
\end{proof}
In light of \cite[Theorem B]{Class1}, it follows from \cite[Theorem 3.5]{IZU04} that all Rokhlin anomalous actions of $G$ on classifiable $\M_{|G|^\infty}$-stable C$^*$-algebras are classified up to cocycle conjugacy by their induced action on the total invariant $\underline{K}T_u$ (see \cite[Section 3]{Class1}) and their anomaly.
\begin{corollary}\label{cor:Generalclassrokh}
    Let $G$ be a finite group. Let $A$ be a unital, simple, separable, nuclear, $\M_{|G|^\infty}$-stable C$^*$-algebra satisfying the UCT and $(\alpha,u)$, $(\beta,v)$ be anomalous $G$-actions with the Rokhlin property on $A$. Then $(\alpha,u)\simeq (\beta,v)$ if and only if $\underline{K}T_u(\alpha)\sim \underline{K}T_u(\beta)$ and $\omicron(\alpha,u)=\omicron(\beta,v)$.
\end{corollary}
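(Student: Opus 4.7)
The plan is to apply Lemma \ref{lem:abstract} with the strongly self-absorbing C$^*$-algebra $\D=\M_{|G|^\infty}$, the functor $\Lambda=\underline{K}T_u$ of \cite{Class1}, and the class $\Ro$ of anomalous $G$-actions with the Rokhlin property on unital, simple, separable, nuclear, $\M_{|G|^\infty}$-stable C$^*$-algebras satisfying the UCT. The class $\Ro$ is closed under conjugation essentially by definition, and $\Lambda$ is invariant under approximate unitary equivalence. Fullness of $\Lambda$ on isomorphisms for the algebras underlying $\Ro$ is supplied by \cite[Theorem B]{Class1}, which is the whole reason we restrict to the classifiable setting. The conclusion of Lemma \ref{lem:abstract} with \ref{item:3} then gives cocycle conjugacy from the combination of the invariant $\underline{K}T_u(\alpha)\sim\underline{K}T_u(\beta)$ and the anomaly condition, which is exactly what the corollary asserts.

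Conditions \ref{item:1} and \ref{item:2} would be handled essentially as in the proofs of Theorems \ref{thm:Rokhclasskirchberg} and \ref{thm:RokhlinclassTAF}. Condition \ref{item:1} is exactly Proposition \ref{prop:absorption}, whose hypothesis is $\M_{|G|^\infty}$-stability, which is precisely built into the definition of $\Ro$. For \ref{item:2}, I would invoke the model actions $(\M_{|G|^\infty},s_G^{\omega},u^{\omega})$ from Section \ref{sec:UHFactions}, which have the Rokhlin property by Proposition \ref{prop:rokhlin}. Their tensor product with the conjugate anomaly actions is a Rokhlin cocycle action on $\M_{|G|^\infty}$, and the combination of \cite[Theorem III.6]{HEJO83} and \cite[Lemma 3.12]{IZU04} gives cocycle conjugacy with $(\M_{|G|^\infty},s_G,1)$. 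Closure of $\Ro$ under tensoring with the conjugate model action is immediate since $\M_{|G|^\infty}$-stability, the UCT and the Rokhlin property are all preserved by tensoring with a Rokhlin action on $\M_{|G|^\infty}$.

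The real content, and the step I expect to be the main obstacle, is condition \ref{item:3}: Rokhlin cocycle actions $\alpha,\beta\in\Ro$ should be cocycle conjugate exactly when $\Lambda(\alpha)\sim\Lambda(\beta)$. Here the strategy is to first reduce, via \cite[Lemma 3.12]{IZU04II}, any Rokhlin cocycle action to a genuine $G$-action with the Rokhlin property through a unitary perturbation, so that the problem becomes the classification of Rokhlin $G$-actions. Then one runs the Izumi-type Rokhlin bootstrap of \cite{IZU04,IZU04II} on top of \cite[Theorem B]{Class1}, that is, given an intertwiner $\Phi$ of $\underline{K}T_u$-dynamical systems one lifts it to an isomorphism of underlying algebras by \cite[Theorem B]{Class1}, transports one action along this isomorphism, and uses vanishing of one-cohomology for Rokhlin actions in the central sequence algebra together with a two-sided equivariant approximate intertwining to upgrade a pointwise approximate unitary equivalence to a genuine conjugacy. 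This is precisely the strategy attributed to Szab\'o in the introduction, and invoking \cite[Theorem 3.5]{IZU04} as the key stability tool. Once \ref{item:3} is in place, Lemma \ref{lem:abstract} delivers the corollary at once.
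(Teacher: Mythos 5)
Your proposal follows essentially the same route as the paper: apply Lemma \ref{lem:abstract} with $\D=\M_{|G|^\infty}$, $\Lambda=\underline{K}T_u$ and the class of Rokhlin anomalous actions on classifiable $\M_{|G|^\infty}$-stable algebras, verify \ref{item:1} and \ref{item:2} as in Theorems \ref{thm:Rokhclasskirchberg}--\ref{thm:RokhlinclassTAF}, and obtain \ref{item:3} by reducing Rokhlin cocycle actions to genuine actions via \cite[Lemma 3.12]{IZU04}, lifting the intertwiner of invariants to an isomorphism by fullness, deducing pointwise approximate unitary equivalence from \cite{Class1}, and concluding with \cite[Theorem 3.5]{IZU04}. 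The only minor quibble is bookkeeping of citations: fullness on isomorphisms is \cite[Theorem A]{Class1}, while \cite[Theorem B]{Class1} supplies the pointwise approximate unitary equivalence, but this does not affect the argument.
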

\begin{proof}
    We apply Lemma \ref{lem:abstract} with $\D=\M_{|G|^\infty}$, $\Ro$ the class of Rokhlin anomalous actions on $\M_{|G|^\infty}$-stable unital, simple, separable, nuclear C$^*$-algebras satisfying the UCT and $\Lambda=\underline{K}T_u$. Firstly, $\Lambda$ is full on isomorphisms by \cite[Theorem A]{Class1}. \ref{item:1} holds by Proposition \ref{prop:absorption}. \ref{item:2} holds as in the proof of Theorem \ref{thm:Rokhclasskirchberg}.
    It remains to show \ref{item:3}. By \cite[Lemma 3.12]{IZU04} it suffices to show that for any two Rokhlin $G$-actions $(A,\alpha)$ and $(B,\beta)$ such that $\underline{K}T_u(\alpha)\sim \underline{K}T_u(\beta)$  then $\alpha\simeq \beta$. This has been shown for simple, unital AH-algebras in \cite[Theorem 3.8]{GASA16}. With \cite[Theorem B]{Class1} in hand this also follows for arbitrary unital, simple, separable, nuclear, $\Z$-stable C$^*$-algebras satisfying the UCT. Indeed, as $\underline{K}T_u$ is full on isomorphisms, there exists an isomorphism $\theta:A\rightarrow B$ such that $\underline{K}T_u(\theta\alpha_g\theta^{-1})=\underline{K}T_u(\beta_g)$ for all $g\in G$. Therefore, it follows from \cite[Theorem B]{Class1} that $\theta\alpha_g\theta^{-1}\au\beta_g$. Now, it follows immediately from \cite[Theorem 3.5]{IZU04} that $\alpha\simeq\beta$.
\end{proof}
We illustrate another application of Lemma \ref{lem:abstract} to the classification of Rokhlin anomalous actions on a class of non-simple C$^*$-algebras. Precisely, we can classify Rokhlin anomalous actions on inductive limits of 1-dimensional NCCW-complexes with trivial $K_1$-groups as a consequence of the classification results of \cite[Section 3.3.1]{GASA16}.
\begin{theorem}\label{thm:1dimNCCW}
Let $G$ be a finite group and $A$ be a C$^*$-algebra that can be written as an inductive limit of 1-dimensional NCCW-complexes with trivial $K_1$ groups satisfying $A\cong A\otimes M_{|G|^\infty}$. If $(\alpha,u)$, $(\beta,v)$ are anomalous actions of $G$ on $A$ then $(\alpha,u)\simeq (\beta,v)$ through an automorphism that is approximately inner if and only if $o(\alpha,u)=o(\beta,v)$ and $\Cu^{\sim}(\alpha_g)=\Cu^{\sim}(\beta_g)$ for all $g\in G$.\footnote{See \cite[Section 2.2]{GASA16} for the definition of the functor $\Cu^{\sim}$.}
\end{theorem}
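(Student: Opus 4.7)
The plan is to apply the abstract classification machinery of Lemma \ref{lem:abstract}, in exactly the same pattern as Theorems \ref{thm:Rokhclasskirchberg}, \ref{thm:RokhlinclassTAF} and Corollary \ref{cor:Generalclassrokh}, with the specific choices $\D=M_{|G|^\infty}$, $\Lambda=\Cu^\sim$, and $\Ro$ the class of Rokhlin anomalous $G$-actions on $M_{|G|^\infty}$-stable inductive limits of $1$-dimensional NCCW complexes with trivial $K_1$-groups. The conclusion ``through an automorphism that is approximately inner'' should fall out of the abstract framework via the observation that, for C$^*$-algebras in this class, an automorphism is approximately inner if and only if it induces the identity on $\Cu^\sim$, so that $\simeq_\Lambda$ for $\Lambda=\Cu^\sim$ coincides with the equivalence relation appearing in the theorem statement. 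This matching property, together with the fact that $\Cu^\sim$ is full on isomorphisms for this class, is part of the Elliott-type classification in \cite[Section 3.3.1]{GASA16} that $\Cu^\sim$ is invariant under approximate unitary equivalence is standard.

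For \ref{item:1}, I would invoke Proposition \ref{prop:absorption} directly since it applies to any separable $M_{|G|^\infty}$-stable C$^*$-algebra carrying a Rokhlin anomalous action. For \ref{item:2}, the construction of model $(G,\omega)$-actions $(\D,s_G^\omega,u^\omega)$ on $M_{|G|^\infty}$ from Section \ref{sec:UHFactions}, combined with Proposition \ref{prop:rokhlin} giving the Rokhlin property (hence property $\mathcal{R}_\infty$ via \cite[Lemma 3.12]{IZU04}), and \cite[Theorem III.6]{HEJO83}, yields the cocycle conjugacy $(\D,s_G^{\overline{\omega}},u^{\overline{\omega}})\otimes (\D,s_G^{\omega},u^\omega)\simeq (\D,s_G,1)$. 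The closure of $\Ro$ under tensoring by $(\D,s_G^{\overline{\omega}},u^{\overline{\omega}})$ follows from the fact that the class of inductive limits of $1$-dimensional NCCW complexes with trivial $K_1$ is closed under tensoring with $M_{|G|^\infty}$.

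The main obstacle is verifying condition \ref{item:3'}: that for cocycle actions $(A,\alpha,u)$ and $(A,\beta,v)$ in $\Ro$, one has $(A,\alpha,u)\simeq_{\Lambda}(A,\beta,v)$ if and only if $\Cu^\sim(\alpha_g)=\Cu^\sim(\beta_g)$ for all $g\in G$. Using \cite[Lemma 3.12]{IZU04} to perturb any Rokhlin cocycle action to a genuine group action, this reduces to the classification of Rokhlin $G$-actions on this class by their induced $\Cu^\sim$-action, which is precisely the content of the relevant results in \cite[Section 3.3.1]{GASA16}. The delicate point is matching the statement there with the equivalence $\simeq_\Lambda$; concretely, one wants that the conjugating automorphism produced by the classification be approximately inner, i.e.\ act trivially on $\Cu^\sim$. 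This follows because, after conjugating so that the induced actions on $\Cu^\sim$ agree and applying the Gardella--Santiago result, the residual conjugating automorphism is trivial at the level of $\Cu^\sim$, and Elliott's intertwining within this class forces it to be approximately inner.

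Once \ref{item:1}, \ref{item:2} and \ref{item:3'} are verified, Lemma \ref{lem:abstract} delivers that $(\alpha,u)\simeq_{\Cu^\sim}(\beta,v)$ if and only if $o(\alpha,u)=o(\beta,v)$ and $\Cu^\sim(\alpha_g)=\Cu^\sim(\beta_g)$ for all $g\in G$, which under the identification above is exactly the desired statement. The forward direction of the theorem (that approximate inner cocycle conjugacy implies equality of anomalies and of induced $\Cu^\sim$-actions) is immediate from functoriality of $\Cu^\sim$ and the fact that the anomaly is an invariant of cocycle conjugacy.
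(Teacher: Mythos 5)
Your proposal follows essentially the same route as the paper: Lemma \ref{lem:abstract} with $\D=M_{|G|^\infty}$, $\Lambda=\Cu^{\sim}$ and $\Ro$ the Rokhlin anomalous actions on $M_{|G|^\infty}$-stable inductive limits of 1-dimensional NCCW complexes with trivial $K_1$, verifying \ref{item:1} via Proposition \ref{prop:absorption}, \ref{item:2} as in Theorem \ref{thm:Rokhclasskirchberg}, \ref{item:3'} via the classification of Rokhlin $G$-actions by $\Cu^{\sim}$ in \cite[Theorem 3.6]{GASA16}, and then upgrading $\simeq_{\Cu^{\sim}}$ to cocycle conjugacy through an approximately inner automorphism. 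The only deviations are citation-level: because the class contains non-unital algebras, the paper untwists Rokhlin cocycle actions using $A\cong A\otimes M_{|G|}$ and \cite[Remark 1.5]{GASZ22} rather than the unital Lemma 3.12 of \cite{IZU04} that you invoke, and it appeals to \cite[Theorem 1]{RO12} both for fullness of $\Cu^{\sim}$ on isomorphisms and for the fact that a $\Cu^{\sim}$-trivial automorphism of such a limit is approximately inner, rather than to \cite[Section 3.3.1]{GASA16}.
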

\begin{proof}
We apply Lemma \ref{lem:abstract} with $\D=M_{|G|^\infty}$, $\Ro$ the class of Rokhlin anomalous actions on $M_{|G|^\infty}$-stable C$^*$-algebras that can be written as an inductive limit of 1-dimensional NCCW-complexes with trivial $K_1$ groups and $\Lambda=\Cu^{\sim}$. Firstly, $\Lambda$ is invariant under approximate unitary equivalence. Moreover, it is full on isomorphisms by \cite[Theorem 1]{RO12} (see also \cite[Corollary 5.2.3]{RO12}). Conditions \ref{item:1} and \ref{item:2} hold as in the proof of Theorem \ref{thm:Rokhclasskirchberg}. Condition \ref{item:3'} holds as a consequence of \cite[Theorem 3.6]{GASA16} (note also that $M_{|G|}(A)\cong A$ so \cite[Remark 1.5]{GASZ22} applies). Now, it follows from Lemma \ref{lem:abstract} that any two Rokhlin anomalous actions $(\alpha,u)$, $(\beta,v)$ of $G$ on an inductive limit of 1-dimensional NCCW-complex satisfy $(\alpha,u)\simeq_{\Cu^{\sim}}(\beta,v)$. But any automorphism of an inductive limit of 1-dimensional NCCW complexes with trivial $K_1$ groups that is the identity under $\Cu^{\sim}$ is approximately inner by \cite[Theorem 1]{RO12}.
\end{proof}
\begin{remark}
Note that, by \cite[Theorem 5.2]{GASA16}, the UHF-stability assumption in Theorem \ref{thm:1dimNCCW} is immediate for the following subclasses:
\begin{enumerate}[(i)]
\item unital C$^*$-algebras that can be written as inductive limits of $1$-dimensional NCCW-complexes;
\item simple C$^*$-algebras with trivial $K_0$-groups that can be written as inductive limits of $1$-dimensional NCCW-complexes;
\item C$^*$-algebras that can be written as inductive limits of punctured-tree algebras.
\end{enumerate}
\end{remark}
We have shown a classification of anomalous actions on some classes of simple C$^*$-algebras. Such a classification also implies a classification of $G$-kernels, we illustrate it by using Theorem \ref{thm:Rokhclasskirchberg}, the same argument may also be used to rewrite the results of Theorem \ref{thm:classW}, Theorem \ref{thm:RokhlinclassTAF} and Corollary \ref{cor:Generalclassrokh}.
\begin{corollary}\label{cor:corGkerclassrokh}
    Let $A$ be a unital Kirchberg algebra satisfying the UCT with $A\cong A\otimes M_{|G|^\infty}$ and $\overline{\alpha}$, $\overline{\beta}$ be $G$-kernels with the Rokhlin property on $A$. Then $\overline{\alpha}$ and $\overline{\beta}$ are $K$ trivially conjugate if and only if $\ob(\overline{\alpha})=\ob(\overline{\beta})$ and $K_i(\overline{\alpha}_g)= K_i(\overline{\beta}_g)$ for all $g\in G$ and $i=0,1$.
\end{corollary}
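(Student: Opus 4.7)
The plan is to reduce the classification of $G$-kernels to the classification of anomalous actions established in Theorem \ref{thm:Rokhclasskirchberg}, by lifting $G$-kernels to anomalous actions and then projecting the resulting equivalences back to $\Out(A)$. The forward implication is essentially formal: if $\theta$ is a $K$-trivial automorphism of $A$ implementing $\theta\,\overline{\alpha}_g\,\theta^{-1}=\overline{\beta}_g$ in $\Out(A)$, then applying $K_i$ (which factors through $\Out(A)$ since $K_i$ vanishes on inner automorphisms) yields $K_i(\overline{\alpha}_g)=K_i(\overline{\beta}_g)$, and the cohomological obstruction $\ob(\cdot)\in H^3(G,\T)$ is conjugation-invariant (as is standard for the $3$-cohomology invariant of a $G$-kernel).

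For the converse, the first step is to choose anomalous action lifts $(\alpha,u)$ and $(\beta,v)$ of $\overline{\alpha}$ and $\overline{\beta}$ respectively; such lifts exist because on a unital C$^*$-algebra one can freely pick representatives of the cosets $\overline{\alpha}_g\in\Out(A)$ and then, since $\alpha_g\alpha_h\alpha_{gh}^{-1}$ is inner, write it as $\Ad(u_{g,h})$. These lifts automatically have the Rokhlin property since Definition \ref{def:Rokhlin} only involves the automorphic data, and this data agrees modulo inner automorphisms with the given Rokhlin lifts of $\overline{\alpha},\overline{\beta}$ (the Rokhlin projections may be absorbed through a reindexing). Using that $A$ is unital and simple, so $Z(U(M(A)))=\T$, the obstructions $\ob(\overline{\alpha})$ and $\ob(\overline{\beta})$ are exactly the cohomology classes $[o(\alpha,u)],[o(\beta,v)]\in H^3(G,\T)$.

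The second step normalises the anomalies. The hypothesis $\ob(\overline{\alpha})=\ob(\overline{\beta})$ provides $\kappa\in C^2(G,\T)$ with $o(\alpha,u)=o(\beta,v)\cdot d\kappa$. Replace $v$ by
\begin{equation*}
v'_{g,h}=\overline{\kappa_{g,h}}\,v_{g,h}.
\end{equation*}
Since $v'$ differs from $v$ by central scalars, $(\beta,v')$ is still a lift of $\overline{\beta}$ with the Rokhlin property, and a direct computation from \eqref{eqn:omega} gives $o(\beta,v')=o(\alpha,u)$.

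The final step applies Theorem \ref{thm:Rokhclasskirchberg} to $(\alpha,u)$ and $(\beta,v')$: the anomalies coincide by construction and $K_i(\alpha_g)=K_i(\overline{\alpha}_g)=K_i(\overline{\beta}_g)=K_i(\beta_g)$ by hypothesis, so there exist unitaries $s_g\in U(A)$ and an automorphism $\theta\in\Aut(A)$ with $K_i(\theta)=\id_{K_i(A)}$ such that $\theta\beta_g\theta^{-1}=\Ad(s_g)\alpha_g$. Passing to $\Out(A)$ yields $\theta\,\overline{\beta}_g\,\theta^{-1}=\overline{\alpha}_g$, which is precisely a $K$-trivial conjugacy of $G$-kernels. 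The only delicate point in the argument is the identification of $\ob(\overline{\alpha})$ with the class of $o(\alpha,u)$ and the fact that a Rokhlin lift of a $G$-kernel with the Rokhlin property can always be found; both are essentially built into the definitions used in the preliminaries (cf.\ \cite[Section 2.1]{EVGI21}).
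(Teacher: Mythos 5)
Your proposal is correct and takes essentially the same route as the paper: choose lifts of $\overline{\alpha}$ and $\overline{\beta}$ whose anomalies agree on the nose (using that $A$ is unital and simple so the obstruction lives in $H^3(G,\T)$ and equals the class of the anomaly, and rescaling one cocycle by a scalar $2$-cochain), apply Theorem \ref{thm:Rokhclasskirchberg}, and push the resulting $K$-trivial cocycle conjugacy down to $\Out(A)$. The only slip is a harmless direction error: with your convention $o(\alpha,u)=o(\beta,v)\cdot d\kappa$ you should replace $v_{g,h}$ by $\kappa_{g,h}v_{g,h}$ rather than $\overline{\kappa_{g,h}}\,v_{g,h}$ (or equivalently define $\kappa$ by $o(\beta,v)=o(\alpha,u)\cdot d\kappa$).
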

\begin{proof}
    The forward direction is clear. To show the reverse direction, pick lifts $(\alpha,u)$ of $\overline{\alpha}$ and $(\beta,v)$ of $\overline{\beta}$ such that $\omicron(\alpha,u)=\omicron(\beta,v)$. As $(\alpha,u)$ and $(\beta,v)$ satisfy the hypothesis of Theorem \ref{thm:Rokhclasskirchberg}, it follows that $(\alpha,u)\simeq (\beta,v)$ and so $\overline{\alpha}$ and $\overline{\beta}$ are conjugate.
\end{proof}
\section{Applications}\label{sec:applications}
We start this section by giving an alternative construction, of a $(G,\omega)$ action on the UHF algebra $M_{|G|^\infty}$ which is visibly compatible with a Bratteli diagram of $M_{|G|^\infty}$. This action is an AF-action in the sense of \cite{ELSU96} and \cite[Definition 4.8]{QCCJRHP22} (see also the discussion in \cite[Section 6.1]{THESIS}). The existence of an AF $\omega$-anomalous action on $M_{|G|^\infty}$ follows from an adaptation of the Ocneanu compactness argument to the C$^*$-setting (\cite{OC88}). We build it explicitily below.
\begin{proposition}\label{prop:inductiveconstruction}
Let $G$ be a finite group and $\omega\in Z^3(G,\T)$, then there exists an AF-$(G,\omega)$ action with the Rokhlin property on $M_{|G|^\infty}$. We denote this action by $\theta_G^{\omega}$.
\end{proposition}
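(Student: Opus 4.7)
The plan is to take $\theta_G^\omega$ to be the action $(s_G^\omega, u_G^\omega)$ constructed in Section \ref{sec:UHFactions}, and to exhibit it as AF with respect to a natural tensor-type filtration of $B\rtimes_{\pi,\overline c}^r K\cong M_{|G|^\infty}$. Once the filtration is in place, the Rokhlin property is immediate from Proposition \ref{prop:rokhlin}.

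To build the filtration, I would set $B_n=\bigl(\bigotimes_{i=1}^n \B(l^2(\Gamma))\bigr)\otimes 1\subset B$, a matrix algebra of dimension $|\Gamma|^{2n}$, and let $A_n=B_n\rtimes_{\pi|_K,\overline c|_K} K$. Since $\pi=\Ad(\lambda_\Gamma)^{\otimes\infty}$ manifestly preserves each $B_n$, and Jones' construction of $\Gamma$ is set up so that $c|_{K\times K}=1$, each $A_n$ is a well-defined finite-dimensional C$^*$-subalgebra of $B\rtimes K$ with unital inclusions $A_n\subseteq A_{n+1}$ and $\overline{\bigcup_n A_n}=B\rtimes K$. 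This exhibits the Bratteli diagram underlying the identification $B\rtimes K\cong M_{|G|^\infty}$ from \cite{JON20}; note that the $A_n$ are typically direct sums of matrix algebras rather than single matrix blocks, which is fine for AF purposes.

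Next, I would verify that $(s_G^\omega,u_G^\omega)$ restricts to an anomalous action on each $A_n$. The formula (\ref{eqn: modelaction}) sends $\sum_k a_kv_k\in A_n$ to a sum of scalar multiples of $\pi_{\hat g}(a_k)v_{\hat g k\hat g^{-1}}$; since $\pi_{\hat g}$ preserves $B_n$, the automorphism $s_G^\omega(g)$ preserves $A_n$. Moreover, the unitaries $u_G^\omega(g,h)$ arising from the construction take the form $\lambda\cdot v_{k(g,h)}$ with $\lambda\in\T$ and $k(g,h)\in K$, hence lie in $\C\rtimes K\subseteq A_1\subseteq A_n$ for every $n\geq 1$. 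Thus the restrictions $(s_G^\omega,u_G^\omega)|_{A_n}$ form a nested sequence of finite-dimensional $(G,\omega)$ actions with dense union, which is precisely the AF condition in the sense of \cite[Definition 4.8]{QCCJRHP22}.

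The main obstacle I anticipate is pinning down the explicit form of $u_G^\omega(g,h)$, and in particular checking that these unitaries truly live in $\C\rtimes K\subseteq A_1$. This requires a careful bookkeeping of the scalars appearing in (\ref{eqn: modelaction}) when computing $s_G^\omega(g)s_G^\omega(h)$ against $s_G^\omega(gh)$, and it relies crucially on the normalisation $c|_{K\times K}=1$ in Jones' construction of $\Gamma\twoheadrightarrow G$. Once this is in hand, Proposition \ref{prop:rokhlin} supplies the Rokhlin property unchanged and the proof is complete.
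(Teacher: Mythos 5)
Your proposal proves the stated existence result by a genuinely different route from the paper. The paper does not try to show that the Section~\ref{sec:UHFactions} model action is AF; instead it builds a new action $\theta_G^\omega$ from scratch on the concrete inductive system $A_n=C(G)\otimes\B(l^2(G))^{\otimes(n-1)}$ with connecting maps $f\otimes T\mapsto 1\otimes M_f\otimes T$, correcting the product-type action $\lambda_G\otimes\Ad(\lambda_G)^{\otimes(n-1)}$ by explicit diagonal automorphisms $d_n(g)$ and taking the unitaries $u_n(g,h)$ to be the images of the function $k\mapsto\omega_{k^{-1},g,h}$ from the $C(G)$ stage; the Rokhlin projections are then the central projections $\delta_g\otimes 1$, and most of the proof is $3$-cocycle bookkeeping verifying compatibility with the connecting maps. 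Your route --- filtering $B\rtimes_{\pi}^r K$ by the finite-dimensional subalgebras $A_n=C^*(B_n,\{v_k:k\in K\})$ with $B_n=\bigl(\bigotimes_{i=1}^n\B(l^2(\Gamma))\bigr)\otimes 1$, noting that formula (\ref{eqn: modelaction}) preserves each $A_n$ because $\pi_{\hat g}$ preserves $B_n$ and $K$ is normal, and importing the Rokhlin property from Proposition~\ref{prop:rokhlin} --- is sound and shorter, and the one step you flag as the obstacle is indeed the only thing to nail down: you need that $u_G^\omega(g,h)$ is a scalar multiple of the canonical unitary $v_{\hat g\hat h\widehat{gh}^{-1}}$ (in particular that the scalar discrepancy between $s_G^\omega(g)s_G^\omega(h)$ and $\Ad(v_{\hat g\hat h\widehat{gh}^{-1}})s_G^\omega(gh)$ is independent of the group element $k$ indexing $a_kv_k$). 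This is exactly the content of Corey Jones' construction in \cite{JON20} (using $dc=\rho^*\omega$ and $c|_{\ker\rho}=1$), which this paper cites but whose unitary data it never writes out, so you would have to quote it explicitly rather than rederive it. What each approach buys: yours avoids the lengthy cocycle computations entirely; the paper's explicit construction produces an action visibly compatible with the standard Bratteli diagram of $M_{|G|^\infty}$, which is what makes Remark~\ref{rem:trivialanom} (identifying $\theta_G^1$ with $s_G$ by interleaving inductive systems) and the comparison with \cite{QCCJRHP22} in Section~\ref{sec:applications} immediate, whereas with your definition of $\theta_G^\omega$ those later points would have to be rephrased or argued through Theorem~\ref{thm:RokhlinclassTAF}. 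For the proposition as stated, either argument suffices.
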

\begin{proof}
In this proof we will use the symbols $g,h,k,x,y,x_i,y_i,s_i$ for $i\in \N$ to denote elements of the group $G$. Let $A_n=C(G)\otimes \bigotimes_{i=1}^{n-1}\B(l^2(G))$ for $n\in \N$, where by convention $A_1=C(G)$. For $f\in C(G)$, let $M_f\in \B(l^2(G))$ be the multiplication operator by $f$. Consider the $^*$-homomorphisms $\varphi_n:A_n\rightarrow A_{n+1}$ defined by $\varphi_n(f\otimes T)=1\otimes M_f\otimes T$
for $f\in C(G)$ and $T\in \bigotimes_{i=1}^{n-1}\B(l^2(G))$.
\par The inductive system $(A_n,\varphi_n)$ has an inductive limit (we write the limit by $A$) which is known to be isomorphic to $\M_{|G|^\infty}$. Indeed, the Bratelli diagram of this AF-algebra is easily seen to be the complete bipartite graph on $|G|$-vertices, it is common knowledge that this coincides with the UHF-algebra of type $|G|^{\infty}$ (see \cite[Example III.2.4]{Dav96} for the case $|G|=2$) We construct a $(G,\omega)$ action on each finite dimensional algebra $A_n$ such that the actions commute with the inclusion maps $\varphi_n$. This will induce an AF $\omega$-anomalous $G$ action on $M_{|G|^\infty}$ by the universal property of the inductive limit (see \cite[Section 6.1]{THESIS}).
\par To be precise, we construct a family of maps $\theta_n:G\rightarrow \Aut(A_n)$ and $u_n:G\times G\rightarrow U(A_n) $ such that:
\begin{enumerate}
\item $\theta_n(g)\theta_n(h)=\Ad(u_n(g,h))\theta_n(gh)$, \label{eqn:unitaryworks}
\item $\omega_{g,h,k}=\theta_n(g)(u_n(h,k))u_n(g,hk)u_n(gh,k)^*u_n(g,h)^*$,\label{eqn:anomalyUHF}
\item $\varphi_{n}(u_n(g,h))=u_{n+1}(g,h)$,\label{eqn:trivial}
\item $\varphi_n \theta_n(g)=\theta_{n+1}(g)\varphi_n$,\label{eqn:throughdiag}
\end{enumerate}
for all $n\in\N$. To build this we will consider the group actions $\theta_n':G\rightarrow \Aut(A_n)$ defined by $\theta_n'(g)=\lambda_G(g)\otimes\bigotimes_{i=1}^{n-1} \Ad(\lambda_G)_g$ where $\lambda_G$ is the left regular representation of $G$. Note that $\varphi_n\theta_n'(g)=\theta_{n+1}'(g)\varphi_n$. To take into account the anomaly, we will tweak $\theta_n'$ by suitable diagonal operators $d_n\in \Aut(A_n)$ and ensuring that (\ref{eqn:unitaryworks}) and (\ref{eqn:anomalyUHF}) hold. To define $d_n$ we start by introducing some notation. Let $\delta_k\in C(G)$ be the point mass at $k$ i.e.
\begin{equation*}
    \delta_k(g)=\begin{cases}
    1\ \text{if}\ g=k,\\
    0\ \text{otherwise}.
    \end{cases}
\end{equation*}
Let $e_{g,h}\in \B(l^2(G))$ be defined by 
\begin{equation*}
    e_{g,h}(f)(k)=\begin{cases}
    f(g)\ \text{if}\ k=h,\\
    0\ \text{otherwise},
    \end{cases}
\end{equation*}
for $f\in l^2(G)$. We now let
\begin{equation*}
\theta_n(g)=d_n(g)\theta_n'(g)
\end{equation*}
with $d_n(g)$ defined inductively
\begin{align*}
d_1(g)=&\id_{A_1},\\
d_2(g)(\delta_k\otimes e_{x_1,y_1})=&\omega_{x_1^{-1},g,g^{-1}k}\overline{\omega_{y_1^{-1},g,g^{-1}k}}(\delta_k\otimes e_{x_1,y_1}),\\
\end{align*}
and 
\begin{align*}
d_n(g)(\delta_k&\otimes e_{x_1,y_1}\otimes\dots\otimes e_{x_{n-1},y_{n-1}})\\
&=\omega_{x_{n-1}^{-1},g,g^{-1}x_{n-2}}\overline{\omega_{x_{n-3}^{-1},g,g^{-1}x_{n-2}}}\overline{\omega_{y_{n-1}^{-1},g,g^{-1}y_{n-2}}}\omega_{y_{n-3}^{-1},g,g^{-1}y_{n-2}}\\
& (d_{n-2}(g)(\delta_k\otimes e_{x_1,y_1}\dots\otimes e_{x_{n-3},y_{n-3}})\otimes e_{x_{n-2},y_{n-2}}\otimes e_{x_{n-1},y_{n-1}})
\end{align*}
for all $n>2$ with the convention that $x_{0}=y_{0}=k$. As we have defined $d_n(g)$ on a spanning set of $A_n$, $d_n(g)$ extend to linear maps from $A_n$ to itself. In fact each $d_n(g)$ is an endomorphism of $A_n$. First, it is clear that they preserve the $^*$-operation. To show the multiplicativity, it is sufficient to check on a spanning set. We show this by induction. For the case $n=2$ it is only non-trivial to check that
$$d_2(g)(\delta_k\otimes e_{x_1,y_1})d_2(g)(\delta_k\otimes e_{y_1,y_2})=d_2(g)(\delta_k\otimes e_{x_1,y_2}).$$
The left hand side is given by
\begin{align*}
&d_2(g)(\delta_k\otimes e_{x_1,y_1})d_2(g)(\delta_k\otimes e_{y_1,y_2})\\
&=\omega_{x_1^{-1},g,g^{-1}k}\overline{\omega_{y_1^{-1},g,g^{-1}k}}\omega_{y_1^{-1},g,g^{-1}k}\overline{\omega_{y_2^{-1},g,g^{-1}k}}(\delta_k\otimes e_{x_1,y_2})\\
&=\omega_{x_1^{-1},g,g^{-1}k}\overline{\omega_{y_2^{-1},g,g^{-1}k}}(\delta_k\otimes e_{x_1,y_2})
\end{align*}
which coincides with the right hand side. To show that $d_n(g)$ is multiplicative for $n>2$ it suffices to show that 
\begin{align*}
d_n(g)(\delta_k\otimes &e_{x_1,y_1}\otimes \dots \otimes e_{x_{n-1},y_{n-1}})d_n(g)(\delta_k\otimes e_{y_1,s_1}\otimes....\otimes e_{y_{n-1},s_{n-1}})\\
&=d_n(g)(\delta_k\otimes e_{x_1,s_1}\otimes....\otimes e_{x_{n-1},s_{n-1}}).
\end{align*}
This follows immediately from the induction hypothesis and a direct computation of the left hand side (as in the case for $n=2$). Notice that each $d_n(g)$ fixes elements of the form $\delta_k\otimes e_{x_1,x_1}\otimes e_{x_2,x_2}\dots \otimes e_{x_{n-1},y_{n-1}}$.
\par To construct a $(G,\omega)$ action on the first stage $A_1$, we let $u_1(g,h)(k)=\omega_{k^{-1},g,h}$. That $(\theta_1,u_1)$ defines a $(G,\omega)$ action on $C(G)$ is a straightforward computation (this is computed in \cite[Section 4]{T-DUALITY}). We proceed to  extend this action on $A_1$ to all of $M_{|G|^\infty}$ through the inductive limit. Let $u_n(g,h)=\varphi_{1,n}(u_1(g,h))$ and $\theta_n(g)=d_n(g)\theta_n'(g)$. For the remaining part of the proof we check that $(\theta_n,u_n)$ satisfy (\ref{eqn:unitaryworks})-(\ref{eqn:throughdiag}) for all $n\in \N$. We will repeatedly use the $3$-cocycle formula during the calculations, instead of commenting on this every time, we will instead colour code the parts of our equations to which we apply the $3$-cocycle formula.
\par We start by showing (\ref{eqn:unitaryworks}). Firstly,
\begin{align*}
\theta_n(g)\theta_n(h)&=d_n(g)\theta_n'(g)d_n(h)\theta_n'(h)\\
&=d_n(g)\theta_n'(g)d_n(h)\theta_n'(g)^{-1}\theta_n'(gh)\\
&=d_n(g)[g\cdot d_n(h)]\theta_n'(gh)
\end{align*}
denoting $g\cdot d_n(h)=\theta_n'(g)d_n(h)\theta_n'(g)^{-1}$. It is clear that (\ref{eqn:unitaryworks}) holds for all $n\in \N$ if and only if $d_n(g)g\cdot d_n(h)d_n(gh)^{-1}=\Ad(u_n(g,h))$ 
 on $A_n$ for all $n\in \N$. This holds trivially for $n=1$. For $n=2$ it follows from the $3$-cocycle formula that
\begin{align*}
&d_2(g)g\cdot d_2(h)d_2(gh)^{-1}(\delta_{ghk}\otimes e_{x_1,y_1})\\
=&d_2(g)g\cdot d_2(h)(\delta_{ghk}\otimes e_{x_1,y_1})
\overline{\omega_{x_1^{-1},gh,k}}\omega_{y_1^{-1},gh,k}\\
=&d_n(g)(\delta_{ghk}\otimes e_{x_1,y_1})\overline{\omega_{x_1^{-1},gh,k}}\omega_{y_1^{-1},gh,k}\omega_{x_1^{-1}g,h,k}\overline{\omega_{y_1^{-1}g,h,k}}\\
=&(\delta_{ghk}\otimes e_{x_1,y_1})\textcolor{red}{\overline{\omega_{x_1^{-1},gh,k}}\omega_{x_1^{-1}g,h,k}\omega_{x_1^{-1},g,hk}}\textcolor{blue}{\overline{\omega_{y_1^{-1},g,hk}}\omega_{y_1^{-1},gh,k}\overline{\omega_{y_1^{-1}g,h,k}}}\\
=&(\delta_{ghk}\otimes e_{x_1,y_1})\omega_{g,h,k}\omega_{x_1^{-1},g,h}\overline{\omega_{g,h,k}}\overline{\omega_{y_1^{-1},g,h}}\\
=&\Ad(\varphi_1(u_1(g,h))(\delta_{ghk}\otimes e_{x_1,y_1}).
\end{align*}
We now proceed with an inductive argument for arbitrary $n$. We assume that (\ref{eqn:unitaryworks}) holds for $n-2$, preforming a similar computation to the case $n=2$;
\begin{align*}
&d_n(g)g\cdot d_n(h)d_n(gh)^{-1}(\delta_{k}\otimes e_{x_1,y_1}\dots\otimes e_{ghx_{n-2},ghy_{n-2}}\otimes e_{x_{n-1},y_{n-1}})\\
=&\big(\Ad(u_{n-2}(g,h))((\delta_{k}\otimes e_{x_1,y_1}\dots\otimes e_{x_{n-3},y_{n-3}})\otimes e_{ghx_{n-2},ghy_{n-2}}\otimes e_{x_{n-1},y_{n-1}})\\
&\textcolor{red}{\overline{\omega_{x_{n-1}^{-1},gh,x_{n-2}}}\omega_{x_{n-1}^{-1}g,h,x_{n-2}}\omega_{x_{n-1}^{-1},g,hx_{n-2}}}\textcolor{blue}{\omega_{x_{n-3}^{-1},gh,x_{n-2}}\overline{\omega_{x_{n-3}^{-1}g,h,x_{n-2}}}\overline{\omega_{x_{n-3}^{-1},g,hx_{n-2}}}}\\
&\textcolor{red}{\overline{\omega_{y_{n-1}^{-1},gh,y_{n-2}}}\omega_{y_{n-1}^{-1}g,h,y_{n-2}}\omega_{y_{n-1}^{-1},g,hy_{n-2}}}\textcolor{blue}{\omega_{y_{n-3}^{-1},gh,y_{n-2}}\overline{\omega_{y_{n-3}^{-1}g,h,y_{n-2}}}\overline{\omega_{y_{n-3}^{-1},g,hy_{n-2}}}}\\
=&\big(\Ad(u_{n-2}(g,h))((\delta_{k}\otimes e_{x_1,y_1}\dots\otimes e_{x_{n-3},y_{n-3}})\otimes e_{ghx_{n-2},ghy_{n-2}}\otimes e_{x_{n-1},y_{n-1}})\\
&\omega_{g,h,x_{n-2}}\omega_{x_{n-1}^{-1},g,h}\overline{\omega_{g,h,x_{n-2}}\omega_{x_{n-3}^{-1},g,h}}\overline{\omega_{g,h,y_{n-2}}}\overline{\omega_{y_{n-1}^{-1},g,h}}\omega_{g,h,y_{n-2}}\overline{\omega_{y_{n-3}^{-1},g,h}}\\
=&\big(\Ad(u_{n-2}(g,h))(\delta_{k}\otimes e_{x_1,y_1}\dots\otimes e_{x_{n-3},y_{n-3}})\otimes e_{ghx_{n-2},ghy_{n-2}}\otimes e_{x_{n-1},y_{n-1}}\big)\\
&\omega_{x_{n-1}^{-1},g,h}\overline{\omega_{x_{n-3}^{-1},g,h}}\overline{\omega_{y_{n-1}^{-1},g,h}}\omega_{y_{n-3}^{-1},g,h}\\
=&(\delta_{k}\otimes e_{x_1,y_1}\dots\otimes e_{x_{n-1},y_{n-1}})\omega_{x_{n-1}^{-1},g,h}\overline{\omega_{y_{n-1}^{-1},g,h}}\\
=&\Ad(u_n(g,h))(\delta_{k}\otimes e_{x_1,y_1}\dots\otimes e_{ghx_{n-2},ghy_{n-2}}\otimes e_{x_{n-1},y_{n-1}}).
\end{align*}
\par For (\ref{eqn:throughdiag}) it suffices to show that $\varphi_nd_n(g)=d_{n+1}(g)\varphi_n$. For $n=1$
\begin{align*}
d_2(g)\varphi_1(\delta_k)&=\sum_{r\in G}d_2(g)(\delta_r\otimes e_{k,k})\\
&=(1\otimes e_{k,k})\\
&=\varphi_1d_1(g)(\delta_k)
\end{align*}
as $d_1$ is the identity map. The case $n=2$ follows too
\begin{align*}
d_3(g)\varphi_2(\delta_k\otimes e_{x,y})&=\sum_{r\in G}d_3(g)(\delta_r \otimes e_{k,k}\otimes e_{x,y})\\
&=(1\otimes e_{k,k}\otimes e_{x,y})\omega_{x^{-1},g,g^{-1}k}\overline{\omega_{y^{-1},g,g^{-1}k}}\\
&=\varphi_2d_2(g)(\delta_k\otimes e_{x,y}).
\end{align*}
Assuming that the case $n-2$ holds, we now argue by induction,
\begin{align*}
 &d_{n+1}(g)\varphi_n(\delta_k\otimes e_{x_1,y_1}\dots\otimes e_{x_{n-1},y_{n-1}})\\
 =&d_{n+1}(g)(\varphi_{n-3}(\delta_k\otimes e_{x_1,y_1}\dots\otimes e_{x_{n-3},y_{n-3}})\otimes e_{x_{n-2,y_{n-2}}}\otimes e_{x_{n-1},y_{n-1}})\\
 =&(d_{n-2}(g)\varphi_{n-3}(\delta_k\otimes e_{x_1,y_1}\dots\otimes e_{x_{n-3},y_{n-3}})\otimes e_{x_{n-2,y_{n-2}}}\otimes e_{x_{n-1},y_{n-1}})\\
 &\omega_{x_{n-1}^{-1},g,g^{-1}x_{n-2}}\overline{\omega_{x_{n-3}^{-1},g,g^{-1}x_{n-2}}}\overline{\omega_{y_{n-1}^{-1},g,g^{-1}y_{n-2}}}\omega_{y_{n-3}^{-1},g,g^{-1}y_{n-2}}\\
 =&(\varphi_{n-2}d_{n-2}(g)(\delta_k\otimes e_{x_1,y_1}\dots\otimes e_{x_{n-3},y_{n-3}})\otimes e_{x_{n-2,y_{n-2}}}\otimes e_{x_{n-1},y_{n-1}})\\
&\omega_{x_{n-1}^{-1},g,g^{-1}x_{n-2}}\overline{\omega_{x_{n-3}^{-1},g,g^{-1}x_{n-2}}}\overline{\omega_{y_{n-1}^{-1},g,g^{-1}y_{n-2}}}\omega_{y_{n-3}^{-1},g,g^{-1}y_{n-2}}\\
=&\varphi_n d_{n}(g)(\delta_k\otimes e_{x_1,y_1}\dots\otimes e_{x_{n-1},y_{n-1}})
\end{align*}
Condition (\ref{eqn:trivial}) is immediate. It remains to show that (\ref{eqn:anomalyUHF}) holds for arbitrary $n$. This follows from (\ref{eqn:anomalyUHF}) for the case $n=1$ and from (\ref{eqn:throughdiag}). For $n\in \N$
\begin{align*}
&\theta_n(g)(u_n(h,k))u_n(g,hk)u_n(gh,k)^*u_n(g,h)^*\\
&=\theta_n(g)(\varphi_{1,n}(u_1(h,k)))\varphi_{1,n}(u_1(g,hk))\varphi_{1,n}(u_1(gh,k)^*)\varphi_{1,n}(u_1(g,h)^*)\\
&=\varphi_{1,n}(\theta_1(g)(u_1(h,k))u_1(gh,k)u_1(g,hk)^*u_1(g,h)^*)\\
&=\omega_{g,h,k}\varphi_{1,n}(1_{A_1})\\
&=\omega_{g,h,k}.
\end{align*}
To show that $\theta_G^\omega$ has the Rokhlin property we construct a family of Rokhlin projections. The projections $\delta_g\otimes \id_{\B(l^2(G))^{\otimes n-1}}\in Z(A_n)$ satisfy $\theta_n(g)(\delta_h\otimes \id_{\B(l^2(G))^{\otimes n-1}})=\delta_{gh}\otimes \id_{\B(l^2(G))^{\otimes n-1}}$ and also $\sum_{g\in G}\delta_g\otimes \id_{\B(l^2(G))^{\otimes n-1}}=\id_{A_n}$. Therefore, the projections $p_g\in A_{\infty}$ with $n$-th coordinate given by $\varphi_{n,\infty}(\delta_g\otimes \id_{\B(l^2(G))^{\otimes n-1}})$ for $g\in G$ satisfy the conditions of Definition \ref{def:Rokhlin}.
\end{proof}
\begin{remark}\label{rem:trivialanom}
In the case that $\omega=1$ the construction in Proposition \ref{prop:inductiveconstruction} greatly simplifies. Indeed, $d_n(g)$ is the identity automorphism and $u_n(g,h)$ is the unit for all $g,h\in G$ and $n\in \N$. Therefore, $\theta_G^{1}$ restricts to the group action $\theta_n=\lambda_G\bigotimes_{i=0}^{n-1}\Ad(\lambda_G)$ on each $A_n$ with $\lambda_G$ the left regular representation. This action coincides with the infinite tensor product action $s_G$ (see Section \ref{sec:absorption}). To see this, consider the inductive system $(B_n,\phi_n)$ with $B_{2n-1}=A_n$, $B_{2n}=\bigotimes_{i=0}^nB(l^2(G))$ and $\phi_{2n-1}(f\otimes T)=M_f\otimes T$, $\phi_{2n}(S)=1\otimes S$ for all $n\in N, f\otimes T\in A_n$ and $S\in B_{2n}$. The even terms of the inductive system $(B_{2n},\phi_{2n+1}\circ\phi_{2n})$ coincide with the inductive limit $(\bigotimes_{i=1}^n B(l^2(G)),M\mapsto\ \id_{B(l^2(G))}\otimes M)$. The odd terms $(B_{2n-1},\phi_{2n}\circ \phi_{2n-1})$ coincide with the inductive system $(A_n,\varphi_n)$ from the proof of Proposition \ref{prop:inductiveconstruction}. This allows to interpolate between $(\bigotimes_{i=1}^n B(l^2(G)),M\mapsto\ \id_{B(l^2(G))}\otimes M)$ and $(A_n,\varphi_n)$. It is immediate that $\theta_G$ and $s_G$ are conjugate. Moreover, it follows from Theorem \ref{thm:RokhlinclassTAF} that $\theta_G^{\omega}$ is cocycle conjugate to $s_G^{\omega}$ for any $\omega\in Z^3(G,\T)$.
\end{remark}
\par We end this paper by studying to what extent Rokhlin anomalous actions on AF-algebras are AF-actions and vice versa. To do this, we will require results of \cite{QCCJRHP22}. In \cite{QCCJRHP22}, the authors associate an invariant to any AF-action $F$, of a fusion category $\mathcal{C}$, on a AF-algebra $A$. Vaguely, this invariant consists of the $K_0$-groups of all $Q$-system extensions of $A$ by $F$ and all natural maps between these extensions. The authors also show that any two AF-actions on AF-algebras $A$ and $B$ are equivalent if and only if their invariants are isomorphic. As observed in \cite[Section 5.1]{QCCJRHP22}, if the acting category $\mathcal{C}$ is \emph{torsion-free} (see \cite[Definition 3.7]{ARDE19}), the invariant of \cite{QCCJRHP22} simplifies to just the module structure of $K_0(A)$ under the action of the fusion ring of $\mathcal{C}$. We apply this when the acting category is $\boldsymbol{\Hilb}(G,\omega)$ and the action is induced by an anomalous action $(\alpha,u)$ as explained in \cite[Proposition 5.6]{EVGI21}. The fusion ring of $\boldsymbol{\Hilb}(G,\omega)$ is $\z[G]$ and the module structure of $K_0(A)$ is given by $K_0(\alpha_g)$.
\begin{corollary}\label{cor:AF}
  Let $G$ be a finite group and $A$ a simple, unital AF-algebra such that $A\cong A\otimes M_{|G|^\infty}$. Let $(\alpha,u)$ be a $(G,\omega)$-action on $A$ such that $K_0(\alpha_g)=\id_A$ for all $g\in G$. If $(\alpha,u)$ has the Rokhlin property, then $(\alpha,u)$ is an AF-action. Moreover, if $[\omega|_H]\neq 0$ for any subgroup $H<G$ then the converse holds.
\end{corollary}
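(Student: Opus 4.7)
The plan is to apply Theorem \ref{thm:RokhlinclassTAF} in the forward direction and the classification of AF actions of fusion categories from \cite{QCCJRHP22} in the converse direction, using the model AF action $\theta_G^\omega$ of Proposition \ref{prop:inductiveconstruction} as the bridge between the two implications.

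For the forward direction, I would first transfer the AF $(G,\omega)$-action $\id_A \otimes \theta_G^\omega$ through the isomorphism $A \cong A \otimes M_{|G|^\infty}$ to produce an AF $(G,\omega)$-action $(\beta, v)$ on $A$ with the Rokhlin property (inherited from Proposition \ref{prop:inductiveconstruction}). The induced action on $K_0(A) \cong K_0(A) \otimes \z[1/|G|]$ is trivial, since the only order- and unit-preserving automorphism of $K_0(M_{|G|^\infty})$ is the identity. Theorem \ref{thm:RokhlinclassTAF} then yields $(\alpha,u) \simeq_K (\beta, v)$, and it remains to check that the AF property transfers along this cocycle conjugacy. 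This is the main technical obstacle, since a cocycle perturbation can in principle spoil a given AF filtration. However, the conjugating isomorphism is assembled through Lemma \ref{lem:abstract} from isomorphisms that are approximately unitarily equivalent to tensor embeddings $\id_A \otimes 1_{\D}$ (via Proposition \ref{prop:absorption}), so a standard reindexing combined with finite-stage approximation of the perturbing $\alpha$-cocycle unitaries should allow one to pull back and enlarge an AF filtration for $(\beta,v)$ into an AF filtration of $A$ that is $\alpha$-invariant and contains all the multipliers $u_{g,h}$.

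For the converse, under the hypothesis $[\omega|_H] \neq 0$ for every non-trivial subgroup $H < G$, the fusion category $\boldsymbol{\Hilb}(G, \omega)$ is torsion-free in the sense of \cite[Definition 3.7]{ARDE19}, since the non-trivial indecomposable module categories over $\boldsymbol{\Hilb}(G,\omega)$ are classified by pairs $(H, \psi)$ with $d\psi = \omega|_H$. As noted in \cite[Section 5.1]{QCCJRHP22}, torsion-freeness collapses the classifying invariant for AF actions on AF algebras to just the induced $\z[G]$-module structure on $K_0$. Both $(\alpha, u)$ and the model $\id_A \otimes \theta_G^\omega$ are AF $(G,\omega)$-actions on the simple unital AF algebra $A$, and both induce the trivial $\z[G]$-module structure on $K_0(A)$ (the former by hypothesis, the latter by construction), so the classification in \cite{QCCJRHP22} forces them to be cocycle conjugate. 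The Rokhlin property, being defined via projections in the central sequence algebra $F(A)$, passes through cocycle conjugacy, so $(\alpha, u)$ inherits it from the model.
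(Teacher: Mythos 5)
Your argument follows the same route as the paper in both directions: forward via Theorem \ref{thm:Rokhlinclass\-TAF} and cocycle conjugacy with the model action $\id_A\otimes\theta_G^{\omega}$ of Proposition \ref{prop:inductiveconstruction}, and converse via torsion-freeness of $\boldsymbol{\Hilb}(G,\omega)$ and the classification of AF-actions in \cite{QCCJRHP22}, with the Rokhlin property pulled back through the resulting cocycle conjugacy. The one place you diverge is the step you single out as ``the main technical obstacle'': transferring AF-ness along the cocycle conjugacy. In the paper this is not an obstacle at all, because the notion of AF-action being used (that of \cite[Definition 4.8]{QCCJRHP22}, phrased through the induced action of the fusion category $\boldsymbol{\Hilb}(G,\omega)$) depends only on the cocycle conjugacy class of $(\alpha,u)$, so AF-ness is preserved essentially by definition (see \cite[Remark 6.1.7]{THESIS}); no reindexing or filtration-pulling argument is needed. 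Your sketched alternative --- producing an $\alpha$-invariant AF filtration of $A$ containing all the unitaries $u_{g,h}$ after a finite-stage approximation of the perturbing cocycle --- is both stronger than what the statement requires and not actually carried out, so as written it leaves a hole in your forward direction that the paper's definitional observation closes immediately. The rest (triviality of $K_0$ of the model action, the identification of torsion-freeness with $[\omega|_H]\neq 0$ for all non-trivial $H<G$ via module categories $(H,\psi)$ with $d\psi=\omega|_H$, and the invariance of the Rokhlin property under cocycle conjugacy) matches the paper's proof.
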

\begin{proof}
      If $(\alpha,u)$ is a $(G,\omega)$-action with the Rokhlin property on an AF-algebra $A$, then by Theorem \ref{thm:RokhlinclassTAF} it is cocycle conjugate to the AF $\omega$-anomalous $G$-action $\id_A\otimes\ \theta_G^{\omega}$ on $A$. Therefore $(\alpha,u)$ is AF as (by definition) being AF is preserved under cocycle conjugacy (see \cite[Remark 6.1.7]{THESIS}). 
      \par We now consider the converse statement. An AF $\omega$-anomalous $G$ action $(\alpha,u)$ induces an AF-action of the fusion category $\boldsymbol{\Hilb}(G,\omega)$ in the sense of \cite{QCCJRHP22} (to see how a $(G,\omega)$-action induces a $\boldsymbol{\Hilb}(G,\omega)$ action see \cite[Proposition 5.6]{EVGI21}, that this is AF is discussed \cite[Remark 6.1.7]{THESIS}). By the hypothesis on $\omega$, the fusion category $\boldsymbol{\Hilb}(G,\omega)$ is torsion free, so as $K_0(\alpha_g)=\id_A$ and $K_0(\id_A\otimes\ \theta_G^{\omega})=\id_A$, then \cite[Theorem A]{QCCJRHP22} yields that the AF $\omega$-anomalous $G$ actions induced by $(\alpha,u)$ and $\id_A\otimes\ \theta_G^\omega$ are cocycle conjugate. So $(\alpha,u)$ has the Rokhlin property.
\end{proof}
\begin{remark}
One may drop the hypothesis that $A\cong A\otimes M_{|G|^\infty}$ in Corollary \ref{cor:AF} if one instead assumes that the anomaly $\omega$ of $(\alpha,u)$ is such that $[\omega]$ has order $|G|$. Indeed, it follows from \cite[Corollary 5.4.4]{THESIS} that in this case $A$ will automatically absorb $M_{|G|^\infty}$. Also, note that under this assumption on $[\omega]$ it is automatic that $[\omega|_H]\neq 0$ for any subgroup $H<G$. 
\end{remark}
\par The behavior observed in the converse of Corollary \ref{cor:AF} is quite different from the behaviour of group actions. It was already observed in \cite{FACK81} that there exist AF-actions of $\z_2$ on $M_{2^\infty}$ which do not have the Rokhlin property.
\bibliographystyle{abbrv}
\bibliography{MyRefs}
\end{document}